\newcommand{\Xcomment}[1]{}
\newtheorem{theorem}{Theorem}[section]
\newtheorem{lemma}[theorem]{Lemma}
\newtheorem{corollary}[theorem]{Corollary}
\newtheorem{prop}[theorem]{Proposition}
\newenvironment{proof}{\noindent{\bf Proof}\/}%
{\hfill$\qed$\medskip}
\def\qed{\ \vrule width.2cm height.3cm depth0cm}
\makeatletter \@addtoreset{equation}{section} \makeatother
\newenvironment{numitem1}{\refstepcounter{equation}\begin{enumerate}%
\item[(\thesection.\arabic{equation})]}{\end{enumerate}}
\newcommand{\refeq}[1]{(\ref{eq:#1})}  
\renewcommand{\section}{\@startsection{section}{1}{0pt}%
{-3.5ex plus -1ex minus -.2ex}{2.3ex plus .2ex}%
{\normalfont\Large}}
\renewcommand{\subsection}{\@startsection{subsection}{2}{0pt}%
{-3.0ex plus -1ex minus -.2ex}{1.5ex plus .2ex}%
{\normalfont\normalsize\bf}}
\renewcommand{\subsection}{\@startsection{subsection}{2}{0pt}%
{-3.0ex plus -1ex minus -.2ex}{-1.5ex plus .2ex}%
{\normalfont\normalsize\bf}}
 \newcommand{\SEC}[1]{\ref{sec:#1}}  
\newcommand{\SSEC}[1]{\ref{ssec:#1}}  
\def\Rset{{\mathbb R}}
\def\Zset{{\mathbb Z}}
\def\Ascr{{\cal A}}
\def\Escr{{\cal E}}
\def\Iscr{{\cal I}}
\def\Lscr{{\cal L}}
\def\Pscr{{\cal P}}
\def\Rscr{{\cal R}}
\def\Sscr{{\cal S}}
\def\Tscr{{\cal T}}
\def\Xscr{{\cal X}}
\def\tilde{\widetilde}
\def\hat{\widehat}
\def\deltain{\delta^{\rm in}}
\def\deltaout{\delta^{\rm out}}
\def\Xmin{X^{\rm min}}
\def\Xmax{X^{\rm max}}
\def\Imax{I^{\rm max}}
\def\convex{{\rm conv}}
\def\rest#1{_{\,\vrule height 1.5ex width 0.05em depth 0pt\, #1}}
\begin{document}
\baselineskip=14pt
\parskip=3pt

\title{Stable matchings, choice functions, and linear orders}

\author{
Alexander V. Karzanov\thanks{Central Institute of Economics and Mathematics of
the Russian Acad. Sci., 47, Nakhimovskii Prospect, 117418 Moscow, Russia; email: akarzanov7@gmail.com}
}

\date{}

 \maketitle

\vspace{-0.7cm}
\begin{abstract}
We consider a model of stable edge sets (``matchings'') in a bipartite graph $G=(V,E)$ in which the preferences for vertices of one side  (``firms'') are given via choice functions subject to standard axioms of consistency, substitutability and cardinal monotonicity, whereas the preferences for the vertices of the other side  (``workers'') via linear orders. For such a model, we present a combinatorial description  of the structure of rotations and develop an algorithm to construct the poset of rotations, in time $O(|E|^2)$ (including oracle calls). As consequences, one can obtain a ``compact'' affine representation of stable matchings and efficiently solve some related problems.
 
 \emph{Keywords}: bipartite graph, choice function, linear preferences, stable matching, affine representation, sequential choice 
 \end{abstract}


\section{Introduction}  \label{sec:intr}

Studies in the field of stable contracts in bipartite markets have been started with the prominent work by Gale and Shapley~\cite{GS} on stable marriages. In their model and its natural extensions (of types ``one-to-many'' or ``many-to-many''), one considers a bipartite graph $G=(V,E)$ in this the vertices are interpreted as ``agents of a market'', and the edges as ``possible contracts'' involving pairs of agents. The preferences of an agent $v\in V$ within the set of its accessible contracts (viz. incident edges) $E_v$ are given via a linear order, the number of constracts allowed to be chosen by $v$ is restricted by a given \emph{quota} $q(v)$, and a set  $X\subseteq E$ of contracts obeying the quotas is regarded as \emph{stable} if no contract from the complement $E-X$ is more preferred for both sides compared with some of those chosen by them.

Since then, stability problems in bipartite graphs with linear preferences on the vertices have deserved popularity in the literature; for extensive surveys on results in this direction, see e.g.~\cite{GI,manl}. Among important properties established for such problems are the following: a stable contract exists for arbitrary quotas; the set of stable contracts constitutes a distributive lattice (subject to natural comparisons on them); the optimal stable contract for each of both sides of the market can be constructed efficiently. (Hereinafter, we say that an algorithm is \emph{efficient} if, roughly, the number of standard operations, or \emph{time}, that it spends is bounded by a polynomial in $|V|,|E|$.)

For convenience, in the (fixed) partition of the vertex set $V$ into two parts (viz. independent sets, color classes), we denote these parts as $W$ and $F$, and call the elements in these as \emph{workers} and \emph{firms}, respectively. (In the classical model with unit quotas considered in~\cite{GS}, the vertices of different parts are interpreted as ``men'' and ``women''.) We liberally refer to an \emph{arbitrary} subset of edges $X\subseteq E$ as a \emph{matching}, and to a stable subset of edges as a \emph{stable matching}. Emphasize that in this work we will deal with matchings only in \emph{bipartite} graphs.

A considerably larger class of models of stability in bipartite graphs arises by replacing linear preferences by ones determined by choice functions. Here for each vertex $v\in V$ a \emph{choice function} (briefly, CF) is an operator $C_v$ on $2^{E_v}$ that, given a subset $Z\subseteq E_v$, chooses in it an ``acceptable'' (more preferred) part $C_v(Z)\subseteq Z$. As a rule, such a choice function $C_v$ obeys the axioms of \emph{consistency} and \emph{substitutability}, which leads to building a reasonable theory of stable matchings generalizing basic properties  for models with linear preferences. (Note also that imposing these two axioms are equivalent to fulfilling the property of  \emph{path independence} going back to Plott~\cite{plott}.) Initial steps in development of this theory appeared in the 1980s; we can especially distinguish the works of Kelso and Crawford~\cite{KC}, Roth~\cite{roth}, and Blair~\cite{blair}. In particular, it was shown there that the set of stable matchings is nonempty and forms a lattice. (Note also  that the above pair of axioms is equivalent to the property of \emph{path independence} going back to Plott~\cite{plott}.)

An important contribution to this theory was due to Alkan in the early 2000s, who showed in~\cite{alk1,alk2} that adding, to the above two axioms, one more axiom of \emph{quota filling}, or, weaker, \emph{cardinal monotonicity}, turns the lattice of stable matchings into a distributive one. This, in light of Birkhoff's representation theorem~\cite{birk}, gives rise to a possibility to represent the stable matchings via ideals of a certain poset. 

Earlier a representation of this sort was described in the simplest case -- for the stable marriages -- by Irving and Leither. They showed in~\cite{IL} that the corresponding poset is formed by the so-called \emph{rotations}, cycles determining transformations of stable marriages into ``neighboring'' ones, that the number of rotations does not exceed $|E|$, and that the poset of rotations (giving a ``compact representation'' of the set of stable marriages) can be constructed efficiently. (At the same time, it was shown in~\cite{IL} that the task of computing the number of stable marriages in a bipartite graph is intractible, $\#P$-hard.) In the subsequent work~\cite{ILG}, the authors explain that using the poset of rotations, one can efficiently solve the problem of minimizing a linear function over the set of stable marriages; this attracts a method of Picard~\cite{pic}, giving a reduction to the classical minimum cut problem in an edge-weighted graph.

Recently Faenza and Zhang~\cite{FZ} conducted an in-depth study of rotations, their posets and applications for Alkan's general models in~\cite{alk1,alk2}. They considered stable matchings generated by choice functions in two situations: 1)  CFs  for all vertices are plottian (viz. consistent and substitutable) and cardinally monotone, and 2) in addition, CFs for the vertices of one side, say, $W$, are quota-filling (while those for the other side, $F$, are cardinally monotone, in general). We will use the name \emph{general boolean model} (GBM) in the first case, and \emph{special boolean model} (SBM) in the second case. Here the term ``boolean'' reflects the fact that we deal with stable subsets in $E$, or, equivalently, with stable $0,1$ functions on $E$, in contrast to models where stable functions with wider values (say, real or arbitrary integer ones) are admitted.

As the main results in~\cite{FZ} concerning GBM, one can distinguish the following: a refinement of the structure of rotations is described (which need not be formed by simple cycles, in general); one shows that the set $\Rscr$ of rotations consists of $O(|F| |W|)$ elements; a bijection $X\stackrel{\omega}{\longmapsto} I$ between the lattice $(\Sscr,\succ)$ of stable matchings $X$ and the lattice $(\Iscr,\subset)$ of ideals $I$ of the poset of rotations is established; one shows that the map $\omega^{-1}$ gives an integer affine representation of the stable matchings via the ideals of the above poset. The last means the existence of an $E\times\Rscr$ matrix  $A$ with integer entries providing the relation $x=Au+x^0$, where $x$ and $u$ are the characteristic vectors (in the spaces $\Rset^E$ and $\Rset^\Rscr$, respectively) of a stable matching and an ideal related via $\omega$, and $x^0$ is the characteristic vector of the $W$-optimal stable matching. Moreover, one shows that the matrix $A$ has full column rank, which implies that the polytope of stable matchings is affinely congruent to an order polytope as in Stanley~\cite{stan}.

These results are strengthened for SBM. More precisely, in~\cite{FZ} one shows that in this case the poset of rotations and the matrix $A$ of affine representation can be constructed efficiently. Here one assumes that the choice functions are given via \emph{oracles}, so that when asking the oracle (viz. CF) $C_v$ about an arbitrary set $Z\subseteq E_v$, it outputs the value $C_v(Z)$ in ``oracle time'' polynomial in $|Z|$. One may liberally assume that this oracle time is measured as a constant $O(1)$ (similar to a wide scope of problems where one is interested in the number of ``oracle calls'' rather than the complexity of their implementations). Under these assumptions,~\cite{FZ} estimates the running time of computing the above poset and matrix as $O(|F|^3 |W|^3)$. Like the models with linear preferences, this implies that one can apply to SBM an efficient method to solve a linear program on the set of stable matchings.

In this paper we consider a special case of SBM. Here, like the full version of SBM, the preferences in the sides of $G$ are defined differently. More precisely, each vertex $v$ in the side $F$ is equipped with a choice function $C_v$ on $2^{E_v}$ which is plottian and cardinally monotone (similar to GBM and SBM). As before, all these CFs are given by oracles. In turn, each vertex $v$ in the side $W$ is equipped with a quota $q(v)$, and the preverences on $E_v$ are defined by a linear order. We conditionally refer to the model of stability in this case as the \emph{combined boolean} one (briefly, CBM). 

It should be noted that the model of this sort arises as a result of a reduction from a stability problem in a bipartite graph in which the preferences for one side, $F$, are given via general CFs (as above) whereas those for the other side, $W$, via so-called \emph{sequential choice functions}, as described in~\cite{dan}.

The main aim of our study of CBM in this paper is to elaborate relatively simple and enlightening methods of constructing rotations and their poset,  proving the bijection between the lattices of stable matchings and ideals of the poset, and more. Our constructions give rise to efficient algorithms having relatively small time bounds (to compare: we construct the poset of rotations in time $O(|E|^2)$, whereas a similar task for SBM in~\cite{FZ} is performed in $O(|F|^3|W|^3)\approx O(|E|^3)$ time). 

The paper is organized as follows.

Section~\SEC{def} contains basic definitions and settings concerning CBM. Section~\SEC{active} reviews assertions and tools needed to introduce rotations in our case. In Section~\SEC{poset} we explain the construction of poset of rotations and prove that the lattices of stable matchings and ideals of this poset are isomorphic. Section~\SEC{build} mainly concerns algorithmic aspects. Here we show that the poset of rotations can be constructed in time $O(|E|^2)$, and the initial stable matching (optimal for $W$) in time $O(|V| |E|)$. Relying on the construction of the rotational poset, in Section~\SEC{affine} we demonstrate affine representation for the set of stable matchings, and give a facet description of the polytope of stable matchings. It should be noted that in order to simplify the description, throughout Sections~\SEC{active}--\SEC{affine}, we prefer to deal with the case of \emph{unit quotas} for all vertices in the part $W$. In Section~\SEC{general}, this is generalized to arbitrary quotas under linear preferences in $W$ (which does not cause big efforts). In the concluding Section~\SEC{sequent}, following~\cite{dan}, we recall the notion of sequential choice functions and a reduction of the stability model with such functions to CBM, and then we briefly describe consequences from our results obtained for the latter model to the former one.


\section{Basic definitions and settings}  \label{sec:def}

In the model of our study, we are given: a bipartite graph $G=(V,E)$ in which the vertex set $V$ is partitioned into two parts (independent sets) $F$ and $W$, called the sets of \emph{firms} and \emph{workers}, respectively. Without loss of generality, one may assume that $G$ has no multiple edges (see Remark~1 in the end of this section); also one may assume that $G$ is connected. In particular, $|V|-1\le |E|\le \binom{|V|}{2}$. The edge of $G$ connecting vertices $w\in W$ and $f\in F$ can be denoted as $wf$. 

For a vertex $v\in V$, the set of incident edges is denoted by $E_v$. On this set, preferences of the vertex (``agent'') $v$ are defined. Preferences for the parts $F$ and $W$ are arranged in a different way.
 \smallskip

$\bullet$ (\textbf{linear preferences}) For a vertex $w\in W$, the preferences are given by use of a \emph{linear order} $>_w$ on $E_w$. If for $e,e'\in E_w$, there holds $e>_w e'$, then we say that the edge $e$ is preferred to $e'$ for $w$.  This is similar to the definition of preferences in the classical problem of stable marriages due to Gale and Shapley~\cite{GS}.
 \smallskip

$\bullet$ (\textbf{choice functions}) For $f\in F$, the preferences use a \emph{choice function} (CF) $C=C_f:2^{E_f}\to 2^{E_f}$. It satisfies several standard conditions (axioms). First of all, $C$ is assumed to be a non-expensive operator, in the sense that for any $Z\subseteq E_f$, there holds $C(Z)\subseteq Z$. Two axioms concerning pairs $Z,Z'\subseteq E_f$ are viewed as follows:
  \begin{itemize}
\item[(A1)] if $Z\supseteq Z'\supseteq C(Z)$, then $C(Z')=C(Z)$;
\item[(A2)] if $Z\supseteq Z'$, then $C(Z)\cap Z'\subseteq C(Z')$.
  \end{itemize}

In particular, (A1) implies that any $Z\subseteq E_f$ satisfies
$C(C(Z))=C(Z)$. In literature, property~(A1) is usually called the
\emph{consistency}, and property~(A2) the \emph{substitutability}, or
\emph{persistence} (the last term is encountered, e.g., in~\cite{AG}). As is shown in~\cite{AM}, validity of both (A1) and (A2) is equivalent to fulfilling the property of \emph{path independence}, or \emph{plottianness} (the term justified by~\cite{plott}); in our case, this is viewed as: 
  \begin{numitem1} \label{eq:plott}for any $Z,Z'\subseteq E_f$, there holds $C(Z\cup Z')=C(C(Z)\cup Z')$.
   \end{numitem1}

One more axiom is known under the name of \emph{cardinal monotonicity}, saying that:
 \begin{itemize}
 \item[(A3)] if $Z\supseteq Z'$, then $|C(Z)|\ge |C(Z')|$.
 \end{itemize}

An important special case of~(A3) is the condition of \emph{quota-filling}; it is applied when there is a prescribed number (\emph{quota}) $q(f)\in \Zset_{>0}$, and is viewed as:
 \begin{itemize}
 \item[(A4)]  any $Z\subseteq E_f$ satisfies $|C(Z)|=\min\{|Z|,q(f)\}$.
 \end{itemize}

One easily checks that the above axioms are valid for the vertices $w\in W$; in this case, each vertex $w$ is endowed with a quota $q(w)\in\Zset_{>0}$, and the operator $C_w$ acts according to the order $>_w$, namely: in a set $Z\subseteq E_w$, it takes $\min\{q(w),|Z|\}$ largest elements w.r.t. $>_w$.

In our further description, up to Section~\SEC{general}, we will consider, as a rule, the special case of CBM in which the quotas $q(w)$ of all vertices $w\in W$ are ones (which makes our description simpler and more enlightening); at the same time, the choice functions $C_f$, $f\in F$, are assumed to be arbitrary subject to conditions (A1),(A2),(A3).
\medskip

$\bullet$ (\textbf{stability})
It will be convenient to us to name any subset $X\subseteq E$ of edges of $G$ as a \emph{matching}. For a vertex $v\in V$, the restriction of $X\subseteq E$ to $E_v$ is denoted as $X_v$; in other words, $X_v=X\cap E_v$. A subset $Z\subseteq E_v$ is called  \emph{acceptable} if $C_v(Z)=Z$; the collection of such subsets is denoted by $\Ascr_v$. This notion is extended to subsets of the whole $E$; namely, we say that $X\subseteq E$ is acceptable if so are all of its restrictions $X_v$, $v\in V$. The collection of all acceptable subsets (matchings) of $E$ is denoted by $\Ascr$.

For any $v\in V$, the choice function $C_v$ enables us to compare acceptable subsets of $E_v$. Namely, for $Z,Z'\in \Ascr_v$, let us say that $Z$ is preferred to $Z'$, denoting this as $Z\succ_v Z'$, if 
   $$
   C_v(Z\cup Z')=Z.
   $$
One can see that the relation $\succ_w$ is transitive.

Based on the comparison of acceptable subsets in the sets $E_v$, one can compare acceptable matchings in the whole $E$. Namely, taking one of the two parts in $G$, say, $F$, for distinct matchings $X,Y\in \Ascr$, we write $X\succ_F Y$ and say that $X$ is preferred to $Y$ relative to the ``firms'' if $X_f\succeq_f Y_f$ holds for all $f\in F$. The order in $\Ascr$ w.r.t. the ``workers'' is defined in a similar way and denoted as $\succ_W$. 
 \medskip

\noindent\textbf{Definition.} For $v\in V$, we say that an edge $e\in E_v$ is \emph{interesting} relative to (or under) an acceptable set $Z\in\Ascr_v$ if $e\in E_v-Z$ and $e\in C_v(Z\cup \{e\})$. This notion is extended to acceptable matchings in $E$. More precisely, for a matching $X\in\Ascr$, an edge $e=wf\in E-X$ is called \emph{interesting} under $X$ for a vertex (``agent'') $v\in\{w,f\}$ if $e\in C_v(X_v\cup \{e\})$. If an edge $e=wf\in E-X$ is interesting under $X$ for both vertices $w$ and $f$, we say that $e$ \emph{blocks} $X$. A matching $X\in \Ascr$ is called \emph{stable} if no edge in $E-X$ blocks $X$. The set of stable matchings for $G=(F\sqcup W,E)$, $>_w$, $q(w)$ ($w\in W$), $C_f$ ($f\in F$) in question is denoted by $\Sscr=\Sscr(G,>,q,C)$.
\medskip

Note that
  \begin{numitem1} \label{eq:Ze}
for $v\in V$, $Z\in\Ascr_v$, $e\in E_v-Z$, and $Z':=C_v(Z\cup\{e\})$:
  \begin{itemize}
\item[(i)] $e$ is interesting under $Z$ if and only if $Z'$ is equal to either (a) $Z\cup\{e\}$, or (b) $(Z-\{e'\})\cup\{e\}$ for some $e'\in Z$;
\item[(ii)] if $e$ is interesting under $Z$, then $Z'\succ_v Z$, and vice versa.
 \end{itemize}
 \end{numitem1}

\noindent Here (i) follows from the relations $C_v(Z)=Z$ and $Z'\subseteq
Z\cup\{e\}$ and the inequality $|C_v(Z\cup\{e\})| \ge |C_v(Z)|$ (in view of the cardinal monotonicity (A3)). Property~(ii) follows from $C_v(Z'\cup Z)=C_v(Z\cup\{e\})=Z'\ne Z$.
  \medskip

\noindent$\bullet$ ~For $v\in V$, the set $\Ascr_v$ endowed with the preference relation $\succ_v$ turns into a lattice; according to facts in~\cite{alk2}, in this lattice, for $Z,Z'\in\Ascr_v$, the least upper bound (``join'') $Z\curlyvee Z'$ is expressed as $C_v(Z\cup Z')$ (while the ``meet'' $Z\curlywedge Z'$ is expressed by using the notion of closure of a set; this is not needed for our purposes and omitted here).

The ``direct sum'' of the lattices $(\Ascr_f,\succ_f)$ over $f\in F$ gives lattice $(\Ascr,\succ_F)$ (in which for $X,X'\in\Ascr$, the join $X\curlyvee_F X'$ and meet $X\curlywedge_F X'$ w.r.t. the part $F$ are defined in a natural way via the restrictions $X_f\curlyvee X'_f$ and $X_f\curlywedge X'_f$ for $f\in F$). The lattice $(\Ascr,\succ_W)$ w.r.t. the part $W$ is defined in a similar way.
 \medskip

\noindent$\bullet$ ~Now we formulate three basic properties of the set $\Sscr$ of stable matchings for CBM, which are important to us; they follow directly from corresponding general results in Alkan's work~\cite{alk2} (see Theorem~10 there). They are as follows:
  \begin{numitem1} \label{eq:stab-mix}
 \begin{itemize}
\item[(a)] the set $\Sscr$ is nonempty, and $(\Sscr,\succ_F)$ forms a 
\emph{distributive} lattice;
\item[(b)] (property of \emph{polarity}): the partial order $\succ_F$ is opposite to the order $\succ_W$, namely: for $X,Y\in\Sscr$, if  $X\succ_F Y$ then
$Y\succ_W X$, and conversely;
\item[(c)] (property of \emph{unicardinality}): for each fixed vertex $v\in V$, the size $|X_v|$ is the same for all stable matchings $X\in \Sscr$.
 \end{itemize}
 \end{numitem1}
We will denote the minimal and maximal elements in the lattice
$(\Sscr,\succ_F)$ by $\Xmin$ and $\Xmax$, respectively (so the former is the best and the latter is the worst for the part $W$, in view of polarity~\refeq{stab-mix}(b)).

Note also that in case of quota-filling (which, in particular, takes place for the part $W$), property~(c) in~\refeq{stab-mix} can be strengthened as follows (cf.~\cite[Corollary~3]{AG}):
  \begin{numitem1} \label{eq:thesame}
for a vertex $v\in V$, if the choice function $C_v$ obeys axiom (A4), and if some (equivalently, any) stable matching $X$ satisfies $|X_v|<q(v)$, then the restriction $X_v$ is the same for all $X\in \Sscr$.
  \end{numitem1}
  
\noindent\textbf{Remark 1.} Sometimes in the literature on stable matchings in graphs (not necessarily bipartite), one assumes the presence of multiple edges, i.e. an input graph $G$ is allowed to be a ``multigraph'' (admitting multiple edges may have reasonable ``economic'' interpretations). Nevertheless, there is a simple transformation of $G$ into a graph $G'$ free of multiple edges, giving an equivalent stable matching problem with $G'$. Such a construction is pointed out in~\cite{CF} for the case of linear preferences in all vertices; this is applicable to our model as well. (So we may always restrict ourselves by consideration of usual graphs, which simplifies a description, leading to no loss of generality; in particular, an edge can be encoded by the pair of its endvertices.) This elegant construction deserves to be briefly outlined.  According to it, an arbitrary (or desirable) edge $e$ connecting vertices $u$ and $v$ in the graph $G$ is replaced by subgraph $K_e$ generated by 6-cycle $O_e$ formed by the sequence of (new) vertices $v_1,\ldots,v_6$ and two additional edges $uv_1$ and $vv_4$. The preferences in the vertices $v_i$ are defined in a circular manner: the edge $v_{i-1}v_i$ is preferred to $v_iv_{i+1}$ (letting $v_6=v_0$); in addition, the edge $uv_1$ ($vv_4$) is assigned to have the middle preference in the triple for $v_1$ (respectively, $v_4$). The quotas of all new vertices are $v_i$ are assigned to be 1. 

One easily checkes that in a stable matching $X'$ for the model with $G'$, all new vertices $v_i$ must be covered by $X'$. It follows that $X'$ contains either none or both edges among $uv_1$ and $vv_4$ as above. This induces, in a natural way, a matching $X$ for $G$, which is easily shown to be stable for the model with $G$. Conversely, a stable matching $X$ for $G$ can be transformed into a stable matching $X'$ for $G'$; here both edges $uv_1$ and $vv_4$ are included in $X'$ if and only if the edge $e$ belongs to $X$. One can see that within the subgraph $K_e$, the matching $X'$ is defined uniquely, except for the case when $e\notin X$ and, in addition, $e$ is interesting under $X$ for none of the vertex $u$ and $v$ (we distinguish this as \emph{case} ($\ast$)). Then $X'$ can be assigned within $K_e$ in two ways: putting either $v_1v_2, v_3v_4, v_5v_6\in X'$, or $v_2v_3, v_4v_5, v_6v_1\in X'$. An interrelation between the rotations for $G'$ and $G$ will be outlined in Remark~2 in Section~\SEC{build}.


\section{Active graph and rotations}  \label{sec:active}

Throughout this section, we fix a matching $X\in\Sscr$ different from $\Xmax$. We are interested in the set $\Sscr_X$ of stable matchings $X'$ that are close to $X$ and satisfy $X'\succ_F X$. This means that $X$ \emph{immediately precedes} $X'$ in the lattice $(\Sscr,\succ_F)$; in other words, there is no $Y\in\Sscr$ between $X$ and $X'$, i.e. such that $X'\succ_F Y\succ_F X$. In order to find $\Sscr_X$ we construct the so-called \emph{active graph} and then extract in it special cycles called \emph{rotations}. Our method of constructing the rotations is essentially simpler and faster than an analogous method in~\cite{FZ} elaborated for a more general model.
 \medskip

\noindent\textbf{Definition 1.} A vertex $w\in W$ is called \emph{deficit} if $|X_w|<q(w)$. Otherwise (when $|X_w|=q(w)$)  $w$ is called \emph{fully filled}; the set of such vertices is denoted as $W^=$.
  \medskip

\noindent(In light of property~\refeq{stab-mix}(c), the set $W^=$ does not depend on $X\in\Sscr$. Also, by~\refeq{thesame}, for a deficit vertex $w$, the set $X_w=X\cap E_w$ does not depend on $X\in\Sscr$.)

In what follows, when no ambiguity can arise, we will write $\succ$ for $\succ_F$. To simplify our description, unless otherwise is explicitly said, we will consider the case of unit quotas $q(w)=1$ for all $w\in W$; general quotas $q$ on $W$ will be postponed to Section~\SEC{general}.


\subsection{Active graph.} \label{ssec:active_g}
For a fully filled vertex $w\in W^=$, let $x_w$ denote the unique edge of the matching $X$ incident to $w$, i.e. $X_w=\{x_w\}$. Consider the set of edges $e=wf\in E_w$ satisfying the following two properties:
  \begin{numitem1} \label{eq:W-admiss}
(a) $x_w>_w e$, and (b) $e$ is interesting for $f$ under $X$, i.e. $e\in
C_f(X_f\cup \{e\})$.
  \end{numitem1}

When this set is nonempty, the best (most preferred) edge in it is called $W$-\emph{admissible} for $X$ and denoted as $a_w=a_w(X)$.

Consider such an edge $a_w=wf$. According to~\refeq{Ze}(i), two cases for $X_f$ and $a_w$ are possible. If $C_f(X_f\cup\{a_w\})$ is expressed as
$(X_f-\{e'\})\cup\{a_w\}$ for some $e'\in X_f$ (case~(b)), then we say that the edge
$e'$ is $F$-\emph{admissible} and denote it as $b_f^w=b_f^w(X)$. Also we say that $b_f^w$ is \emph{associated} with $a_w$, and that the pair $(a_w,b_f^w)$ forms a \emph{tandem} passing the vertex $f$.

(When $C_f(X_f\cup\{a_w\})=X_f\cup\{a_w\}$ (cf. case~(a) in~\refeq{Ze}(i)), the edge $a_w$ generates no tandem. Note also that some $F$-admissible edges may be associated with two or more $W$-admissible edges, i.e. different tandems $(a_w,b_f^w)$ and $(a_{w'},b_f^{w'})$ with $b_f^w=b_f^{w'}$ are possible.) It follows immediately from the definitions of admissible edges that
  \begin{numitem1} \label{eq:ab}
any tandem $(a,b)$ passing a vertex $f\in F$ satisfies
$a\notin X_f$, $b\in X_f$, and $C_f(X_f\cup\{a\})=(X_f\cup\{a\})-\{b\}$.
  \end{numitem1}

Let $D=D(X)=(V,E_D)$ be the directed graph whose edge set consists of
the $W$-admissible edges directed from $W$ to $F$ and  the $F$-admissible edges directed from $F$ to $W$; we use notation of the form $(w,f)$ for the former, and $(f,w)$ for the latter edges (where $w\in W$ and $f\in F$). Apply to $D$ the following procedure.

  \begin{description}
\item[\emph{Cleaning procedure}:] When scanning a vertex $w\in W$ in a current graph $D$, if we observe that $w$ has leaving $W$-admissible edge $a_w=(w,f)$ but no entering $F$-admissible edge (of the form $(f',w)$), then we delete $a_w$ from $D$. Simultaneously, if such an $a_w$ has the associated edge $b_f^w$ and this edge does not occur in other tandems passing $f$ in the current $D$, then we delete $b_f^w$ from $D$ as well. Along the way, we remove from $D$ each isolated (zero degree) vertex whenever it appears. Repeat the procedure with the updated $D$, and so on, untill  $D$ stabilizes.
  \end{description}

Let $\Gamma=\Gamma(X)=(V_\Gamma,E_\Gamma)$ denote the graph $D$ upon termination of the above procedure. We refer to $\Gamma$ as the \emph{active graph} for $X$, and its edges as the \emph{active edges}.  Define $W_\Gamma:=W\cap V_\Gamma$ and $F_\Gamma:=F\cap V_\Gamma$. For a vertex $v\in V_\Gamma$, denote the set of edges of $\Gamma$ leaving $v$ (entering $v$) by $\deltaout(v)=\deltaout_\Gamma(v)$ (resp. by $\deltain(v)=\deltain_\Gamma(v)$. The graph $\Gamma$ possesses the following nice properties (of balancedness):

 \begin{numitem1} \label{eq:balance}
(a) each vertex $w\in W_\Gamma$ satisfies $|\deltaout(w)|=|\deltain(w)|=1$; (b) each vertex $f\in F_\Gamma$ satisfies $|\deltaout(f)|=|\deltain(f)|$, and the tandems $(a_w,b_f^w)$ passing $f$ are edge disjoint and form a partition of the set
$\deltaout(f)\cup\deltain(f)$ (where $a_w\in\deltain(f)$ and $b_f^w\in\deltaout(f)$).
  \end{numitem1}

Indeed, from the definitions of $W$- and $F$-admissible edges we observe that
$|\deltain(w)|,|\deltaout(w)|\le 1$ for all $w\in W_\Gamma$, and
$|\deltain(f)|\ge|\deltaout(f)|$ for all $f\in F_\Gamma$. As a result of the Cleaning procedure, the inequalities in the former expression turn into
$1=|\deltain(w)|\ge|\deltaout(w)|$, while the form of the latter expression (as being an inequality where the l.h.s. is greater than or equal to the r.h.s.) preserves. Now the desired properties follow from evident balance relations (since in $\Gamma$, the edges leaving $W$ and the ones entering $F$ are the same, and similarly for the edges leaving $F$ and the ones entering $W$).


\subsection{Rotations.} \label{ssec:rotation}
From~\refeq{balance} it follows that the active graph $\Gamma=\Gamma(X)$ is decomposed into a set of pairwise edge disjoint directed cycles, where each cycle $L=(v_0,e_1,v_1,\ldots,e_k,v_k=v_0)$ is uniquely constructed in a natural way, namely: for $i=1,\ldots,k$, if $v_i\in W_\Gamma$, then $\{e_i\}=\deltain(v_i)$ and $\{e_{i+1}\}=\deltaout(v_i)$, and if $v_i\in F_\Gamma$, then the pair $(e_i,e_{i+1})$ forms a tandem passing $v_i$. Note that all edges in $L$ are different, but $L$ can be self-intersecting in vertices of the part $F$ (so $L$ is \emph{edge simple} but not necessarily simple). Depending on the context,  the cycle $L$ may also be regarded as a subgraph in $\Gamma$ and denoted as $L=(V_L,E_L)$.

Let $\Lscr=\Lscr(X)$ be the set of above-mentioned cycles in $\Gamma$. For each $L\in\Lscr$, define $(L^+,L^-)$ to be the partition of the set of its edges, where $L^+$ consists of the edges going from $W$ to $F$, and $L^-$ of the ones going from $F$ to $W$ (called, respectively, $W$-\emph{active} and $F$-\emph{active} edges of $L$). The cycles $L\in\Lscr$ are just what we call the \emph{rotations} generated by the matching $X$. The key properties of rotations are exhibited in the following two assertions.
  \begin{prop} \label{pr:XXp}
For each $L\in\Lscr(X)$, the matching $X':=(X-L^-)\cup L^+$ is stable and satisfies $X'\succ X$.
  \end{prop}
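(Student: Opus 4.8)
The plan is to verify the two claims about $X':=(X-L^-)\cup L^+$ separately: first that $X'\succ X$, and then that $X'$ is stable. The preference claim should be the easier one, so I would dispatch it first. The key is to examine, for each firm $f\in F$, how the restriction $X'_f$ relates to $X_f$. By \refeq{balance}(b) the rotation $L$ enters and leaves $f$ through tandems $(a_w,b_f^w)$, and by \refeq{ab} each such tandem satisfies $a_w\notin X_f$, $b_f^w\in X_f$, and $C_f(X_f\cup\{a_w\})=(X_f\cup\{a_w\})-\{b_f^w\}$. Thus passing from $X_f$ to $X'_f$ amounts to swapping in the $W$-active edges $a_w$ for the $F$-active edges $b_f^w$ along the tandems at $f$. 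I would first check that $X'$ is acceptable, i.e. $X'_f\in\Ascr_f$ for each $f$ (using plottianness and the tandem equalities, extended from a single swap to several edge-disjoint swaps), and that $X'_w\in\Ascr_w$ for each worker $w$ (immediate under unit quotas, since $|X'_w|\le 1$). Then I would argue $X'_f\succ_f X_f$ for every firm touched by $L$, invoking \refeq{Ze}(ii) together with the characterization of $\curlyvee$ as $C_f(\cdot\cup\cdot)$; summing over $f\in F$ gives $X'\succ_F X$ by definition of $\succ_F$.

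The harder part is stability, so I would devote most of the argument to showing no edge $e=wf\in E-X'$ blocks $X'$. I would split $E-X'$ according to the worker's status. For a fully filled worker $w\in W^=$, the edge $x'_w$ covering $w$ in $X'$ is at least as preferred (w.r.t. $>_w$) as $x_w$, because along the rotation $w$ trades its matched edge only in the $W$-admissible direction, where by \refeq{W-admiss}(a) the admissible edge $a_w$ satisfies $x_w>_w a_w$; care is needed here to confirm the direction of the swap makes $x'_w$ more preferred, not less. The crucial observation is that any $e=wf$ that is interesting for $w$ under $X'$ (i.e. $e>_{w}x'_{w}$) was already interesting for $w$ under $X$, since $x'_w$ is $>_w$-dominated by $x_w$. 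Hence if such an $e$ failed to block $X$ — which it did, as $X$ is stable — then $e$ was not interesting for $f$ under $X$. I would then show $e$ remains non-interesting for $f$ under $X'$, using substitutability (A2) and consistency (A1): the firm's chosen set only moved ``upward'' in $\succ_f$ from $X_f$ to $X'_f$, and plottianness \refeq{plott} lets me compare $C_f(X'_f\cup\{e\})$ with $C_f(X_f\cup\{e\})$.

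The main obstacle, I expect, is precisely this last step: transferring non-interestingness of $e$ for $f$ from $X$ to $X'$. The subtlety is that $X'_f$ is obtained from $X_f$ by removing the edges $b_f^w$ and adding the edges $a_w$, so an edge $e$ that ``lost a competitor'' might in principle become newly acceptable to $f$. To control this I would lean on the fact that $X'_f\succ_f X_f$, so $C_f(X'_f\cup X_f)=X'_f$, and combine this with path independence: $C_f(X'_f\cup\{e\})=C_f(C_f(X'_f\cup X_f)\cup\{e\})=C_f(X'_f\cup X_f\cup\{e\})$, which via (A2) applied to the pair $X_f\cup\{e\}\subseteq X'_f\cup X_f\cup\{e\}$ relates the chosen elements back to $C_f(X_f\cup\{e\})=X_f\ni\!\!\!\!/\;e$. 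The point is that $e\notin C_f(X_f\cup\{e\})$ forces $e\notin C_f(X'_f\cup\{e\})$ as well, so $e$ is not interesting for $f$ under $X'$ and hence does not block it. I would also separately handle deficit workers $w$ (where $X_w$ is the same for all stable matchings by the remark following Definition~1, so such $w$ are untouched by $L$) and verify the boundary case where $e$'s worker lies outside $V_\Gamma$, confirming these edges cannot block either. Assembling these cases yields that no edge blocks $X'$, so $X'\in\Sscr$.
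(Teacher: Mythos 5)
Your first half (acceptability of $X'$ and $X'_f\succ_f X_f$ for the firms touched by $L$, obtained by extending the single-tandem swap to several edge-disjoint tandems) is sound and is exactly the paper's route via Lemma~\ref{lm:a1-ak}. The stability half, however, rests on a false step: you have the direction of the worker's preference shift backwards. In the rotation $L$, the edge of $L^-$ entering a worker $w\in V_L$ is $x_w$ and the edge of $L^+$ leaving it is $a_w$, so $X'_w=\{a_w\}$; by \refeq{W-admiss}(a) we have $x_w>_w a_w$, i.e.\ $x'_w$ is strictly \emph{less} preferred than $x_w$ --- as it must be, by polarity \refeq{stab-mix}(b): when firms improve, workers get worse. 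Your text asserts both directions at once (``$x'_w$ is at least as preferred as $x_w$'' and, a sentence later, ``$x'_w$ is $>_w$-dominated by $x_w$''). With the true direction, your ``crucial observation'' fails: any edge $e=wf$ with $x_w>_w e>_w a_w$ is interesting for $w$ under $X'$ but was \emph{not} interesting for $w$ under $X$. For precisely these edges the stability of $X$ tells you nothing about $f$, so your chain ($e$ does not block $X$ $\Rightarrow$ $e$ not interesting for $f$ under $X$ $\Rightarrow$ $e$ not interesting for $f$ under $X'$) never gets started --- and these are exactly the dangerous candidates for blocking $X'$.

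The missing idea is the \emph{maximality} built into the definition of the $W$-admissible edge: $a_w$ is the \emph{most preferred} edge satisfying \refeq{W-admiss}. The paper's proof runs: assume $e=wf$ blocks $X'$; first show $e\notin X$ via Lemma~\ref{lm:a1-ak} (a case you also skip --- for a blocker $e\in L^-$ one has $e\in X_f$, so ``$e$ not interesting for $f$ under $X$'' is not even defined); then show $e$ \emph{must} be interesting for $f$ under $X$ --- this is your plottianness computation, but run in the contrapositive; finally, since $X$ is stable and $e$ is interesting for $f$ under $X$, the edge $e$ cannot be interesting for $w$ under $X$, i.e.\ $x_w>_w e$, so $e$ satisfies both conditions of \refeq{W-admiss} while $e>_w a_w=x'_w$, contradicting the choice of $a_w$ as the best such edge. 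So your transfer step is a correct and necessary ingredient, but it plays the opposite role from the one you assign it: it rules out blockers that are uninteresting for $f$ under $X$, after which the contradiction for the remaining blockers comes from the maximality of $a_w$, not from the stability of $X$ alone.
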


We say that the matching $X'$ defined in this proposition \emph{is obtained from $X$ by applying the rotation} $L$, and denote the set of these over all $L\in\Lscr(X)$ by $\Sscr_X$.
  \begin{prop} \label{pr:XYXp}
Let $Y\in \Sscr$ and $X\prec Y$. Then there exists $X'\in\Sscr_X$ such that $X'\preceq Y$.
  \end{prop}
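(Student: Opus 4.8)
The plan is to reduce the claim to producing one rotation all of whose workers are strictly better off in $X$ than in $Y$, and then to build such a rotation along the tandem structure of the active graph. Write $x_w,y_w,x'_w$ for the edges at a fully filled worker $w$ in $X,Y,X'$ (deficit workers have identical restrictions in every stable matching, by~\refeq{thesame}). Since $X'$ will be stable by Proposition~\ref{pr:XXp} and $X\prec Y$, polarity~\refeq{stab-mix}(b) turns the goal $X'\preceq Y$ into $X'\succeq_W Y$, i.e.\ $x'_w\ge_w y_w$ for all $w$; and $X\succ_W Y$ already gives $x_w\ge_w y_w$, so workers off the rotation are fine. A worker $w$ on a rotation $L$ has $x'_w=a_w<_w x_w$, so the only danger is $a_w<_w y_w$. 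Set $W_0:=\{w\in W^=:x_w>_w y_w\}$. For $w\in W_0$ the edge $y_w=wf\notin X$ satisfies~\refeq{W-admiss}(a); moreover $Y_f\succeq_f X_f$ means $C_f(X_f\cup Y_f)=Y_f$, so substitutability (A2) applied to $X_f\cup\{y_w\}\subseteq X_f\cup Y_f$ gives $y_w\in C_f(X_f\cup\{y_w\})$, i.e.~\refeq{W-admiss}(b). Hence $y_w$ is $W$-admissible and, $a_w$ being the best $W$-admissible edge at $w$, $a_w\ge_w y_w$. It therefore suffices to find a rotation $L\in\Lscr(X)$ with $W_L\subseteq W_0$.

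The crux — and the step I expect to be the main obstacle — is a \emph{forward-closure} property of $W_0$ under the tandem structure: for $w\in W_0$, the edge $a_w=wf$ creates a tandem $(a_w,b_f^w)$, and the worker $w''$ with $b_f^w=x_{w''}$ again lies in $W_0$. Both parts rest on the same mechanism, combining $C_f(X_f\cup Y_f)=Y_f$ with the axioms and the stability of $Y$. For the forward-closure, suppose $w''\notin W_0$, so $x_{w''}=y_{w''}\in X_f\cap Y_f$. If $a_w=y_w\in Y_f$, then (A2) applied to $X_f\cup\{a_w\}\subseteq X_f\cup Y_f$ places $x_{w''}\in Y_f\cap(X_f\cup\{a_w\})$ into $C_f(X_f\cup\{a_w\})=(X_f\cup\{a_w\})-\{x_{w''}\}$ — absurd. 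If $a_w>_w y_w$ (so $a_w\notin Y_f$), then $a_w$ is interesting for $f$ under $Y$: were it not, $C_f(Y_f\cup\{a_w\})=Y_f$, and path independence~\refeq{plott} together with (A2) on $X_f\cup\{a_w\}\subseteq X_f\cup Y_f\cup\{a_w\}$ would again force $x_{w''}\in C_f(X_f\cup\{a_w\})$; being also preferred to $y_w$ by $w$, the edge $a_w$ would then block $Y$, a contradiction. That $a_w$ is of tandem type (and not a pure addition, case~(a) of~\refeq{Ze}(i)) follows by the same two cases: when $a_w=y_w$, cardinal monotonicity (A3) and unicardinality~\refeq{stab-mix}(c) forbid $|C_f(X_f\cup\{a_w\})|=|X_f|+1$; when $a_w>_w y_w$, that cardinality would again make $a_w$ interesting for $f$ under $Y$, hence blocking, again impossible.

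Finally I assemble the rotation. Since $X\ne Y$, some worker is strictly better off in $X$, so $W_0\ne\emptyset$. Starting from any $w_0\in W_0$ and iterating the well-defined map $w\mapsto w''$ inside the finite set $W_0$, the walk $w_0,a_{w_0},f,b_f^{w_0}=x_{w_1},w_1,\dots$ must close up into a directed cycle $L$ all of whose workers lie in $W_0$. Each such worker retains, throughout the Cleaning procedure, the entering $F$-admissible edge $x_w$ supplied by the cycle itself, so no edge of $L$ is ever eligible for deletion; hence $L$ is one of the cycles of $\Gamma(X)$, i.e.\ a rotation in $\Lscr(X)$. By Proposition~\ref{pr:XXp}, $X':=(X-L^-)\cup L^+\in\Sscr_X$ is stable and $X'\succ X$. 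On $L$ we have $x'_w=a_w\ge_w y_w$ and off $L$ we have $x'_w=x_w\ge_w y_w$, so $X'\succeq_W Y$, that is $X'\preceq Y$ by polarity~\refeq{stab-mix}(b), as required.
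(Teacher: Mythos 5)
Your proof is correct and takes essentially the same approach as the paper: your ``forward-closure'' of $W_0$ (the tandem exists, and its $F$-admissible edge lies outside $Y$, so the next worker is again in $W_0$) is precisely the paper's Claim, established by the same case split $a_w=y_w$ versus $a_w>_w y_w$ with the same uses of (A2)/(A3), path independence, unicardinality and blocking contradictions, and the assembly of the cycle mirrors the paper's path-extraction. The only differences are cosmetic: you conclude on the worker side and invoke polarity (the paper instead tracks firm-side comparisons $X_f\prec_f X'_f\preceq_f Y_f$ via Lemma~\ref{lm:a1-ak}), and you make explicit the survival of the cycle under the Cleaning procedure, a point the paper leaves implicit.
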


The proofs of these propositions will essentially use the next lemma. Hereinafter, to simplify notation, for a subset of edges $Z$ and elements $a\notin Z$ and $b\in Z$, we may write $Z+a$ for $Z\cup \{a\}$, and $Z-b$ for $Z-\{b\}$.

 \begin{lemma} \label{lm:a1-ak}
Let $f\in F$. Let $(a(1),b(1)), \ldots,(a(k),b(k))$ be different (pairwise edge disjoint) tandems in $\Gamma$ passing $f$. Then for any $I\subseteq \{1,\ldots,k\}=:[k]$, there holds
  $$
  C_f(X_f+a(1)+\cdots+a(k)-\{b(i)\colon i\in I\})
   =X_f+a(1)+\cdots+a(k)-b(1)-\cdots-b(k).
  $$
  \end{lemma}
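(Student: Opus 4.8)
The plan is to reduce the whole family of identities (one for each $I\subseteq[k]$) to the single instance $I=\emptyset$, i.e.\ to the computation of $C_f(X_f+a(1)+\cdots+a(k))$, and then to read off all the remaining cases by consistency. Throughout I write $A:=\{a(1),\ldots,a(k)\}$ and $B:=\{b(1),\ldots,b(k)\}$. Since the tandems are pairwise edge disjoint, these are $k$-element sets, and by the tandem property~\refeq{ab} we have $A\cap X_f=\emptyset$ and $B\subseteq X_f$; hence $|X_f+A|=|X_f|+k$ and $|(X_f+A)-b(1)-\cdots-b(k)|=|X_f|$.

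First I would establish that every $b(j)$ is rejected from the full set, namely $b(j)\notin C_f(X_f+A)$. This is where substitutability does the work: applying~(A2) to the pair $X_f+A\supseteq X_f+a(j)$ gives $C_f(X_f+A)\cap(X_f+a(j))\subseteq C_f(X_f+a(j))$, and by~\refeq{ab} we have $b(j)\in X_f+a(j)$ while $b(j)\notin C_f(X_f+a(j))=(X_f+a(j))-b(j)$; hence $b(j)\notin C_f(X_f+A)$. Thus $C_f(X_f+A)\cap B=\emptyset$, and since $C_f$ is non-expansive this yields $C_f(X_f+A)\subseteq(X_f+A)-b(1)-\cdots-b(k)$, so $|C_f(X_f+A)|\le|X_f|$. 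On the other hand, cardinal monotonicity~(A3) applied to $X_f+A\supseteq X_f$ gives $|C_f(X_f+A)|\ge|C_f(X_f)|=|X_f|$, using that $X_f\in\Ascr_f$. Comparing sizes with the above containment forces equality, i.e.\ $C_f(X_f+A)=X_f+a(1)+\cdots+a(k)-b(1)-\cdots-b(k)$, which is exactly the case $I=\emptyset$.

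Finally, for an arbitrary $I\subseteq[k]$ set $Z_I:=(X_f+A)-\{b(i)\colon i\in I\}$. Since $\{b(i)\colon i\in I\}\subseteq B$, we obtain the nesting $(X_f+A)-b(1)-\cdots-b(k)\subseteq Z_I\subseteq X_f+A$, where the left-hand set equals $C_f(X_f+A)$ by the previous step. Consistency~(A1), in the form $Z\supseteq Z'\supseteq C_f(Z)\Rightarrow C_f(Z')=C_f(Z)$ with $Z=X_f+A$ and $Z'=Z_I$, then gives $C_f(Z_I)=C_f(X_f+A)=X_f+a(1)+\cdots+a(k)-b(1)-\cdots-b(k)$, as required. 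The one genuinely delicate point is the rejection step for the $b(j)$: it is tempting to argue inductively that feeding in the $a$'s one at a time keeps swapping $a(j)$ for $b(j)$, but then one has to re-prove a tandem-type relation in the changing context obtained after the first $j-1$ swaps, which is awkward. Routing instead through substitutability on the single enlargement $X_f+a(j)\subseteq X_f+A$ sidesteps this, and the cardinality count then pins the choice down exactly.
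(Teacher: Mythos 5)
Your proof is correct and follows essentially the same route as the paper's: reject each $b(j)$ from $C_f(X_f+A)$ via substitutability~(A2) applied to the pair $X_f+A\supseteq X_f+a(j)$ together with the tandem relation~\refeq{ab}, pin down equality by cardinal monotonicity~(A3) against $|C_f(X_f)|=|X_f|$, and then transfer to arbitrary $I$ by consistency~(A1) using the nesting $Z_{[k]}\subseteq Z_I\subseteq Z_\emptyset$. The closing remark about avoiding an inductive swap argument is apt but describes exactly the choice the paper also makes.
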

  \begin{proof}
~Denote $X_f+a(1)+\cdots+a(k)-\{b(i)\colon i\in I\}$ by $Z_I$. We have to show that $C_f(Z_I)=Z_{[k]}$ for any $I\subseteq[k]$.

First we show this for $I=\emptyset$. To this aim, we compare the actions of $C_f$ on $Z_\emptyset=X_f+a(1)+\cdots+a(k)$ and on $Y_i:=X_f+a(i)$ for an arbitrary 
$i\in[k]$. The definition of the tandem $(a(i),b(i))$ implies $C_f(Y_i)=X_f+a(i)-b(i)$. Applying axiom~(A2) to the pair $Z_\emptyset\supset Y_i$, we have
  $$
  C_f(Z_\emptyset)\cap Y_i\subseteq C_f(Y_i)=X_f+a(i)-b(i).
  $$
Since $Y_i$ contains $b_i$, it follows that $b_i\notin C_f(Z_\emptyset)$.

Thus, $C_f(Z_\emptyset)\subseteq Z_\emptyset-b(1)-\cdots-b(k)=Z_{[k]}$. Here the inclusion should turn into equality, which follows from the cardinal monotonicity applied to the pair $Z_\emptyset\supseteq X_f$, giving
  $$
  |C_f(Z_\emptyset)|\ge|C_f(X_f)|=|X_f|=|Z_{[k]}|.
  $$

Now consider an arbitrary $I\subseteq[k]$. Then $Z_\emptyset\supseteq Z_I\supseteq C_f(Z_\emptyset)=Z_{[k]}$, and applying axiom~(A1), we obtain $C_f(Z_I)=Z_{[k]}$, as required.
  \end{proof}

This lemma implies that
  \begin{numitem1} \label{eq:Xfab}
the set $Z:=X_f+a(1)+\cdots+a(k)-b(1)-\cdots-b(k)$ is acceptable for $f$ and satisfies $Z\succ_f X_f$.
  \end{numitem1}
Since Lemma~\ref{lm:a1-ak} applied to $I=\emptyset$ gives $C_f(Z\cup X_f)=C_f(X_f+a(1)+\cdots+a(k))=Z$.
 \medskip

\noindent\textbf{Proof of Proposition~\ref{pr:XXp}.} 
First of all, note that the set $X'_v$ is acceptable for all $v\in V$. This follows from the acceptability of $X_v$ if $v\notin V_L$. For $w\in W\cap V_L$, the set $X'_w$ consists of a unique edge, and $C_w(X'_w)=X'_w$ is obvious. And for $f\in F\cap V_L$, the acceptability of $X'_f$ follows from~\refeq{Xfab} when $L^+\cap
E_f=\{a_1,\ldots,a_k\}$ and $L^-\cap E_f=\{b(1,\ldots,b(k)\}$. Therefore,
$X'\in\Ascr$.

Now, arguing by the contrary, suppose that $X'$ is not stable and consider a blocking edge $e=wf$ for $X'$, i.e. $e$ is interesting for $w$ under $X'_w$, and for $f$ under $X'_f$. Note that $e\notin X$. (For otherwise, $e\in X$ and $e\notin X'$ would imply $e\in L^-\cap E_f=:B_f$, and applying Lemma~\ref{lm:a1-ak} to $A_f:=L^+\cap E_f$ and $B':=B_f-e$, we would have $C_f(X'_f+e)=C_f((X_f-B')\cup A_f)=X'_f$, contrary to the fact that $e$ is interesting for $f$ under $X'_f$.)

Suppose that the given edge $e$ is not not interesing for $f$ under $X$. Then $C_f(X_f+e)=X_f$, whence (using the plotianness~\refeq{plott}) we obtain
 $$
 C_f(X'_f\cup X_f+e)=C_f(X'_f\cup C_f(X_f+e))=C_f(X'_f\cup X_f)=X'_f.
 $$
On the other hand, taking $Z':=C_f(X'_f+e)$, we have
  $$
  C_f(X'_f\cup X_f+e)=C_f(C_f(X'_f\cup X_f)+e)= C_f(X'_f+e)=Z'.
  $$
Therefore, $Z'=X'_f$. But in view of~\refeq{Ze}(ii), the fact that $e$ is interesting under $X'_f$ must imply $Z'\succ_f X'_f$; a contradiction.

Thus, $e$ is interesting for $f$ under $X$.

Now to come to the crucial contradiction, compare $e$ with the active edge $a_w$ and the edge $e'\in X$ incident to $w$; then $\{a_w\}=X'_w$ and $\{e'\}=X_w$. Since $e$ is interesting for $w$ under $X'$, we have $e>_w a_w$. At the same time, since the edge $e$ is interesting for $f$ under $X$, the edge $e$ cannot be interesting for $w$ under $X$ (in view of the stability of $X$); hence $e'>_w e$. Thus, $e$ satisfies property~\refeq{W-admiss} and is more preferred than the $W$-active edge $a_w$; a contradiction.

So $X'$ admits no blocking edges, and therefore it is stable, as required.
\hfill$\qed$
 \medskip

\noindent\textbf{Proof of Proposition~\ref{pr:XYXp}.} 
In order to find a rotation $L\in\Lscr(X)$ determining a required matching $X'$, we first note that, by~\refeq{stab-mix}(c), the sizes of restrictions of $X$ and $Y$ for each vertex $v\in V$ are equal, i.e. $|X_v|=|Y_v|$. Since $X\ne Y$, there exists a vertex $w\in W$ such that $X_w\ne Y_w$. Then $X_w$ and $Y_w$ consist of the unique edges $x_w$ and $y_w$, respectively, and one holds $x_w>_w y_w$ (since $X\prec Y$ implies $X\succ_W Y$, by the polarity~\refeq{stab-mix}(b)).

The edge $y_w=wf$ must be interesting for the vertex $f$ under $X$. Indeed  $Y_f\succeq_f X_f$ implies $C(Y_f\cup X_f)=Y_f$, where $C:=C_f$. Then, using the plottianness, we have
  $$
  Y_f=C(Y_f\cup X_f)=C((Y_f-y_w)\cup (X_f+ y_w))=C((Y_f-y_w)\cup C(X_f+y_w)).
  $$
Now $y_w\notin (Y_f-y_w)$ implies $y_w\in C(X_f+y_w)$, whence $y_w$ is interesting for $f$ under $X$.

Then $w$ has the active edge $a_w=wg$; it satisfies $x_w>_w a_w\ge_w y_w$. Consider the vertex $g$ and the sets $X_g$ and $Y_g$. The edge $a_w$ is interesting for $g$ under $X$, but not under $Y$ (for otherwise we would have $a_w\ne y_w$ and $a_w>_w y_w$, whence  $a_w$ is interesting under  $Y$ for both vertices $w$ and $g$, contradicting the stability of $Y$). So $C_g(Y_g+a_w)=Y_g$.
 \medskip

\noindent\textbf{Claim.} ~\emph{$C_g(X_g+a_w)=X_g+a_w-b$ for some $b\in X_g$; in addition, this $b$ does not belong to $Y$, and $Z:=X_g+a_w-b$ satisfies $X_g\prec_g Z\preceq_g Y_g$.}
   \medskip

 \begin{proof}
~Since $a_w$ is interesting for $g$ under $X_g$, two cases are possible (cf.~\refeq{Ze}(i)): (a) $C(X_g+a_w)=X_g+a_w$, or~(b) $C(X_g+a_w)=X_g+a_w-b$ for some $b\in X_g$, where $C:=C_g$. In case~(a), if $a_w\in Y$, then
  $$
  |C(X_g+a_w)|=|X_g|+1>|Y_g|=|C(Y_g\cup X_g)|;
  $$
at the same time, $X_g+a_w\subseteq Y_g\cup X_g$, contradicting the cardinal monotonicity. And if $a_w\notin Y$, then
  $$
  C(Y_g\cup X_g+a_w)= C(C(Y_g\cup X_g)+a_w)=C(Y_g+a_w),
  $$
and also $|C(Y_g+a_w)|\ge |C(X_g+a_w)|=|X_g|+1$ (using the cardinal monotonicity for the inclusion $Y_g\cup X_g+a_w\supset X_g+a_w$). Then $C(Y_g+a_w)\ne Y_g$, and therefore, the edge $a_w$ is blocking for $Y$ (taking into account $a_w\ne y_w$); a contradiction.

Thus, case~(b) takes place. Suppose that $b\in Y$. Then $C(Y_g\cup X_g+a_w)=C(Y_g+a_w)$, and also
  \begin{multline*}
  C(Y_g\cup X_g+a_w)=C((Y_g-b)\cup X_g+a_w)=C((Y_g-b)\cup C(X_g+a_w)) \\
  =  C((Y_g-b)\cup(X_g+a_w-b)).
  \end{multline*}
Since the operands in the last union do not contain the element $b$, we have
$b\notin C(Y_g+a_w)$. But then $C(Y_g+a_w)\subseteq Y_g+a_w-b\ne Y_g$, which leads to a contradiction both in case $a_w\in Y$, and in case $a_w\notin Y$
(implying that $a_w$ blocks $Y$). So $b\notin Y$. 

Now the desired comparisons for $X_g$, $Z=X_g+a_w-b$ and $Y_g$ easily follow.
 \end{proof}

Let $b=w'g$. The Claim implies that the edge $b$ is associated (forms a tandem) with $a_w$, and that the vertex $w'$ is incident to the edge $y_{w'}\in Y$ which is different from $b=x_{w'}\in X$. Then $x_{w'}>_{w'} y_{w'}$ (in view of $X\succ_W Y$), and we can apply to the pair $(x_{w'},y_{w'})$ reasonings similar to those applied earlier to the pair $(x_w,y_w)$.

By continuing this process, we obtain an ``infinite'' path consisting of alternating $W$-active and $F$-active edges for $X$. In this path, every pair of consecutive edges incident to a vertex in $F$, say, $e,e'\in E_f$, forms a tandem such that: $e\notin X_f\ni e'$, and the set $Z:=X_f+e-e'$ satisfies $Y_f\succeq_f Z$. Extracting from this path a minimal piece between two copies of the same vertex in $W$, we obtain a cycle $L$ being the rotation determining a required matching $X'$, namely, $X':=(X-L^-)\cup L^+$ (where $L^+$ and $L^-$ are the sets of $W$- and $F$-active edges in $L$, respectively). Here all vertices in $W\cap V_L$ are different, and the construction provides that each vertex $w$ among these satisfies $x_w>_w x'_w\ge_w y_w$. As to the vertices in $F\cap V_L$, any $f$ among these can be passed by the cycle $L$ several times (giving pairwise edge-disjoint tandems); using Lemma~\ref{lm:a1-ak} and the above Claim, one can see that $X_f\prec_f X'_f\preceq_f Y_f$.

This completes the proof of Proposition~\ref{pr:XYXp}. \hfill$\qed\qed$


\section{Poset of rotations}  \label{sec:poset}

In this section we establish important additional properties of rotations which give rise to a construction of poset of rotations.

Let $\Tscr$ be a sequence of matchings $X_0,X_1,\ldots,X_N$, where $X_0$ is stable and each $X_i$ is obtained from $X_{i-1}$ by applying (or \emph{shifting along}) a rotation $L_i\in\Lscr(X_{i-1})$, i.e. $X_i=(X_{i-1}-L^-_i)\cup L^+_i$. Then, by Proposition~\ref{pr:XXp}, all matchings $X_i$ are stable and there holds $X_0\prec X_1\prec\cdots\prec X_N$ (where $\prec=\prec_F$). Such a sequence 
$\Tscr$ is called a \emph{route} going from $X_0$ to $X_N$. The set $\{L_1,\ldots,L_N\}$ is denoted by $\Rscr(\Tscr)$. One can see that
 \begin{numitem1} \label{eq:trassa}
\begin{itemize}
 \item[(i)] the amount of matchings in any route $\Tscr$  does not exceed $|E|$;
 \item[(ii)] for any stable matching $X$, there exists a route going from the minimal matching $\Xmin$ (the worst for $F$ and the best for $W$) to the maximal matching  $\Xmax$ which passes $X$.
  \end{itemize}
  \end{numitem1}
Indeed, if $X_i$ is obtained by applying a rotation to $X_{i-1}$, then for each vertex $w\in W$ where $(X_i)_w$ is different from $(X_{i-1})_w$, the matching edge incident to $w$ becomes less preferred. This gives~(i) (and even the bound $|\Tscr|\le |E|/2$, in view of $|W_L|\ge 2$). In its turn,~(ii) easily follows from Proposition~\ref{pr:XYXp}.

Also, applying Lemma~\ref{lm:a1-ak}, one easily shows that the rotations in the active graph $\Gamma(X)$ commute. More precisely:
  \begin{numitem1} \label{eq:commut}
for any subset $\Lscr'\subset\Lscr(X)$, the matching $X'$ obtained from $X$ by deleting the edges from $\cup(L^-\colon L\in \Lscr')$ and adding the edges from $\cup(L^+\colon L\in\Lscr')$ is stable, and each $L'\in\Lscr(X)-\Lscr$ is a rotation in $\Gamma(X')$; in particular, rotations in $\Lscr(X)$ can be applied in an arbitrary order.
  \end{numitem1}

This enables us to obtain the following important property of invariance of the set of rotations applied in the routes connecting fixed stable matchings (originally a property of this sort was revealed in~\cite{IL} for classical stable marriages and subsequently was shown by a number of authors for more general models of stability).
  \begin{lemma} \label{lm:invar_rot}
Let $X,Y\in\Sscr$ and $X\prec Y$. Then for all routes $\Tscr$ going from $X$ to $Y$, the sets of rotations $\Rscr(\Tscr)$ are the same.
 \end{lemma}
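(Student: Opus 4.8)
The plan is to prove the invariance by induction on the length $N$ of a route $\Tscr$ from $X$ to $Y$ (the number of rotations applied along it), using the local commutativity~\refeq{commut} as the engine of an exchange argument. Write $\Tscr\colon X=X_0\prec X_1\prec\cdots\prec X_N=Y$, where $X_i$ is obtained from $X_{i-1}$ by shifting along $L_i\in\Lscr(X_{i-1})$, and let $\Tscr'\colon X=X'_0\prec\cdots\prec X'_M=Y$ be an arbitrary second route, with rotations $L'_1,\ldots,L'_M$. The base case $N=0$ forces $X=Y$, and since the matchings strictly increase along any route, the only route is the empty one; so assume $N\ge 1$. It suffices to prove the following exchange sub-lemma: the first rotation $L:=L_1$ of $\Tscr$ occurs among $L'_1,\ldots,L'_M$, and $\Tscr'$ can be replaced by a route $\Tscr''$ from $X$ to $Y$ with $\Rscr(\Tscr'')=\Rscr(\Tscr')$ whose first rotation is $L$. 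Granting this, the tails of $\Tscr$ and of $\Tscr''$ are two routes from $X_1$ to $Y$, the former of length $N-1$; by the induction hypothesis they have equal rotation sets, and adjoining $L$ yields $\Rscr(\Tscr)=\Rscr(\Tscr'')=\Rscr(\Tscr')$.

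For the sub-lemma I would first track the ``survival'' of $L$ along $\Tscr'$. Starting from $L\in\Lscr(X'_0)=\Lscr(X)$, I would show by induction on $i$ that as long as $L\notin\{L'_1,\ldots,L'_i\}$ one has $L\in\Lscr(X'_i)$: applying~\refeq{commut} at $X'_{i-1}$ with the singleton $\{L'_i\}$ shows that every rotation of $\Lscr(X'_{i-1})$ other than $L'_i$ --- in particular $L$ --- remains a rotation of $\Gamma(X'_i)$. Consequently, if $L$ never coincided with any $L'_i$, we would obtain $L\in\Lscr(X'_M)=\Lscr(Y)$, which I claim is impossible. Indeed, $X_1=(X-L^-)\cup L^+\preceq Y$, so by polarity~\refeq{stab-mix}(b) we have $X_1\succeq_W Y$; hence for each $W$-vertex $w$ met by $L$ the new edge $a_w=(X_1)_w$ satisfies $a_w\ge_w y_w$, where $Y_w=\{y_w\}$. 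Picking any $F$-active edge $b=w'g\in L^-$, the edge $b$ is the old matching edge $x_{w'}$ and satisfies $b>_{w'}a_{w'}\ge_{w'} y_{w'}$, so $b\ne y_{w'}$ and hence $b\notin Y$; therefore $L^-\not\subseteq Y$ and $L$ cannot be a rotation of $\Lscr(Y)$ (whose $F$-active edges all lie in $Y$, by~\refeq{ab}). This contradiction yields $L=L'_j$ for some $j$.

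It then remains to move $L'_j=L$ to the front of $\Tscr'$. The survival argument already gives $L\in\Lscr(X'_i)$ for all $i<j$, and the prefix matchings $X'_0,\ldots,X'_{j-1}$ are those of the original route. I would ``bubble $L$ down'' one position at a time: to swap $L$ (currently at position $p$) with its predecessor $L'_{p-1}$, observe that both lie in $\Lscr(X'_{p-2})$ and are distinct, so by~\refeq{commut} they commute, and applying them in the opposite order reaches the same matching; this produces a valid route with the same rotation set in which $L$ sits at position $p-1$. Repeating $j-1$ times brings $L$ to the front, giving the desired $\Tscr''$ and completing the sub-lemma.

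The step I expect to be the main obstacle is precisely this survival-and-consumption argument: certifying that $L$ persists as an available rotation along $\Tscr'$ until it is actually applied, and that it must be applied because $L\notin\Lscr(Y)$. Both points hinge on invoking~\refeq{commut} in the correct direction and on using polarity and the linear orders $>_w$ to guarantee that some $F$-active edge of $L$ leaves the matching for good. Once commutativity is in hand, the bubble-down and the closing induction are routine.
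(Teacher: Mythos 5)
Your proof is correct, and it takes a genuinely different route from the paper's. The paper argues by contradiction on a \emph{maximal bad} matching $X'$ between $X$ and $Y$ (one admitting two routes to $Y$ with distinct rotation sets): commutativity~\refeq{commut} yields a diamond $X'\to Z,Z'\to Z''$, and the goodness of $Z$ and $Z'$ forces the two routes' rotation sets to agree --- a local-confluence argument in the style of Newman's lemma. That proof is shorter, but it relies on Proposition~\ref{pr:XYXp} to supply routes from intermediate matchings to $Y$, including the implicit claim that the diamond point $Z''$ still lies below $Y$. Your induction on route length avoids this entirely: you never need the existence of any route beyond the two given ones. Instead you prove an exchange sub-lemma --- the first rotation $L$ of $\Tscr$ survives along $\Tscr'$ (by \refeq{commut} applied to singletons, together with edge-disjointness of rotations in one active graph, which also keeps the partition $(L^+,L^-)$ and hence the inclusion $L^-\subseteq X'_i$ intact), it must eventually be applied because $L\notin\Lscr(Y)$ (your polarity argument via \refeq{stab-mix}(b),(c), \refeq{W-admiss}(a) and \refeq{ab}, showing that an edge of $L^-$ has left the matching for good), and it can then be bubbled to the front by pairwise commuting swaps. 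What your approach buys is self-containedness (no appeal to route existence) and an explicit proof of the ``pending rotations are eventually consumed'' fact, which the paper never isolates; the cost is a longer argument drawing on more of the fine structure of rotations (polarity, unicardinality, and the linear orders on $W$). Both proofs ultimately rest on \refeq{commut} as the engine.
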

 \begin{proof}
~Let $\Xscr$ denote the set of stable matchings $X'$ such that $X\preceq X'\preceq Y$. We know that the set of routes going from $X'\in\Xscr$ to $Y$ is nonempty (in light of Proposition~\ref{pr:XYXp}). Let us say that a matching $X'\in\Xscr$ is \emph{bad} if there exist two routes $\Tscr,\Tscr'$ going from $X'$ to $Y$ such that $\Rscr(\Tscr)\ne \Rscr(\Tscr')$, and \emph{good} otherwise. One has to show that the matching $X$ is good (when $Y$ is fixed).

Suppose that this is not so, and consider a bad matching $X'\in\Xscr$ that is maximal, in the sense that each matching $Z\in\Xscr$ such that $X'\prec Z\preceq Y$ is already good. In any route going from $X'$ to $Y$, the first matching $Z$  after $X'$ is obtained from $X'$ by applying a certain rotation in $\Lscr(X')$. Therefore, the choice of $X'$ implies that there are two rotations $L,L'\in\Lscr(X')$ such that the matchings $Z$ and $Z'$ obtained from $X'$ by applying $L$ and $L'$ (respectively) are good, but there is a route $\Tscr$ from $X'$ to $Y$ passing $Z$ and a route $\Tscr'$ from $X'$ to $Y$ passing $Z'$ for which $\Rscr(\Tscr)\ne\Rscr(\Tscr')$.

At the same time, since the rotations $L$ and $L'$ commute (cf.~\refeq{commut}), $L$ is a rotation for $Z'$, and $L'$ a rotation for $Z$. Therefore, there are two routes $\tilde\Tscr$ and $\tilde\Tscr'$ going from $X'$ to $Y$ such that $\tilde \Tscr$ begins with $X',Z,Z''$, and $\tilde\Tscr'$ begins with $X',Z',Z''$, after which these routes coincide; here $Z''$ is obtained from $Z$ by applying $L'$, or, equivalently, it is obtained from $Z'$ by applying $L$. Then $\Rscr(\tilde\Tscr)=\Rscr(\tilde\Tscr')$. Since both $Z$ and $Z'$ are good, there must be $\Rscr(\tilde\Tscr)=\Rscr(\Tscr)$ and
$\Rscr(\tilde\Tscr')=\Rscr(\Tscr')$. But then we have $\Rscr(\Tscr)=\Rscr(\Tscr')$; a contradiction.
 \end{proof}

We call a route going from $\Xmin$ to $\Xmax$ \emph{full}. By Lemma~\ref{lm:invar_rot}, for any full route $\Tscr$, the set of rotations $\Rscr(\Tscr)$ is the same; we denote this set by $\Rscr$ (so $\Rscr$ consists of all possible rotations applicable to matchings in $\Sscr$). A known method to compare rotations, originally demonstrated in~\cite{IL}, is suitable for our model as well and allows us to introduce a poset structure on $\Rscr$.
 \medskip

\noindent\textbf{Definition 2.} ~For rotations $R,R'\in\Rscr$, we say that $R$ precedes $R'$ and denote this as $R\lessdot R'$ if in every full route, the rotation
$R$ is applied \emph{earlier} than the rotation $R'$.
 \medskip

This binary relation is transitive and anti-symmetric and determines a partial order on $\Rscr$; we call $(\Rscr,\lessdot)$ the \emph{poset of rotations} (or the \emph{rotational poset}) for $G$. A close relationship between this poset and the stable matchings gives rise to a ``compact representation'' of the lattice $(\Sscr,\prec)$ (in spirit of Birkhoff~\cite{birk} where an arbitrary finite distributive lattice is representable via the lattice of ideals of a poset).

More precisely, for each $X\in\Sscr$, take a route $\Tscr$ from $\Xmin$ to $X$ and denote the set of rotations $\Rscr(\Tscr)$ as $\omega(X)$. This set depends only on $X$, and for $R,R'\in\Rscr$, if $R\lessdot R'$ and $R'\in\omega(X)$, then $R\in\omega(X)$, i.e. $\omega(X)$ is an \emph{ideal} of the poset $(\Rscr,\lessdot)$. The converse is valid as well; moreover, the map $\omega$ gives a lattice isomorphism.

 \begin{prop} \label{pr:omega}
The map $X\mapsto \omega(X)$ establishes an isomorphism between the lattice $(\Sscr,\prec_F)$ of stable matchings and the lattice $(\Iscr,\subset)$ of ideals of the poset $(\Rscr,\lessdot)$ (where the greatest lower and least upper bounds for $I,I'\in\Iscr$ are $I\cap I'$ and $I\cup I'$, respectively).
  \end{prop}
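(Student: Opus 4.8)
The plan is to show that $\omega$ is an isomorphism of the two lattices by proving it is a bijection that both preserves and reflects the order; since $(\Sscr,\prec)$ and $(\Iscr,\subseteq)$ are both lattices, an order isomorphism is automatically a lattice isomorphism, and the meet and join of two ideals (down-sets) are plainly their intersection and union. Preliminarily one checks that $\Iscr$ is a lattice under $\subseteq$, since the intersection and the union of two down-sets of $(\Rscr,\lessdot)$ are again down-sets. It is already recorded before the statement that $\omega$ is well defined and that each $\omega(X)$ is an ideal, so it remains to verify monotonicity, injectivity, surjectivity onto $\Iscr$, and reflection of the order. Monotonicity is immediate: if $X\preceq Y$, I concatenate a route from $\Xmin$ to $X$ with a route from $X$ to $Y$ (the latter exists by iterating Proposition~\ref{pr:XYXp}), obtaining a route from $\Xmin$ to $Y$; by Lemma~\ref{lm:invar_rot} its rotation set is $\omega(Y)$, and it contains $\omega(X)$.

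For injectivity the crucial ingredient is a converse to Lemma~\ref{lm:invar_rot}: two routes that start at the same matching and use the same set of rotations end at the same matching. I would prove this by induction on the size of the common rotation set, the step resting on a reordering lemma: if $L\in\Lscr(M)$ occurs somewhere in a route issuing from $M$, that route can be rewritten, with the same rotation set and the same endpoint, so that $L$ is applied first. This is proved by induction on the position of $L$, peeling off the current first rotation $M_1\in\Lscr(M)$ — after which, by the commutation property~\refeq{commut}, $L$ is still exposed, so the induction applies to the shorter tail — and finally interchanging the first two rotations, which are both in $\Lscr(M)$. Granting the converse, injectivity follows: if $\omega(X)=\omega(Y)$, route both $X$ and $Y$ through their meet $M=X\curlywedge_F Y$; the portions from $M$ to $X$ and from $M$ to $Y$ use the rotation sets $\omega(X)\setminus\omega(M)$ and $\omega(Y)\setminus\omega(M)$, which coincide, so $X=Y$.

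The decisive tool for the remaining two properties is the exposure characterization: for stable $X$, the rotations in $\Lscr(X)$ are exactly the $\lessdot$-minimal elements of $\Rscr\setminus\omega(X)$. One inclusion is easy from the definitions: if $R\in\Lscr(X)$, appending $R$ to a route reaching $X$ shows $R\notin\omega(X)$, and any $R''\lessdot R$ must precede $R$ in every full route, in particular in one that applies $R$ immediately after $X$, which forces $R''\in\omega(X)$; hence $R$ is minimal in the complement. Granting the full characterization, surjectivity and order-reflection both fall out. Given an ideal $I$, applying the successive $\lessdot$-minimal unapplied rotations (each exposed by the characterization) builds a route from $\Xmin$ whose intermediate matching $X_I$ satisfies $\omega(X_I)=I$, giving surjectivity. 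For reflection, if $\omega(X)\subseteq\omega(Y)$ then $\omega(X)$ is a down-set inside $(\omega(Y),\lessdot)$, hence an initial segment of a linear extension of $\omega(Y)$; realizing that extension as a route from $\Xmin$ to $Y$ passes through a matching with ideal $\omega(X)$, which by injectivity equals $X$, whence $X\preceq Y$.

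The hard part is the nonobvious inclusion of the exposure characterization: every $\lessdot$-minimal element $R$ of $\Rscr\setminus\omega(X)$ actually lies in $\Lscr(X)$; equivalently, the only constraints on the order in which rotations may be applied are those already encoded in $\lessdot$. I would attack this by a minimal-counterexample argument. Taking a full route through $X$ in which $R$ is applied as early as possible, minimality of $R$ forces every rotation applied strictly between $X$ and $R$ to be $\lessdot$-incomparable to $R$ (anything below $R$ is already in $\omega(X)$, anything above comes later), and choosing the route so as to minimize their number isolates a single rotation $R_0$ such that $R$ is exposed at the matching reached just after applying $R_0$, but not one step earlier, even though $R$ and $R_0$ are incomparable. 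Ruling out this configuration — showing that a rotation incomparable to $R$ cannot be the sole cause of $R$'s becoming exposed — is precisely where the combinatorial structure of the active graph $\Gamma(X)$ from Section~\SEC{active} must be used: the commutation property~\refeq{commut} yields only the converse implication (that applying some $R_0\in\Lscr(X)$ keeps an already exposed $R$ exposed) and does not by itself close this gap.
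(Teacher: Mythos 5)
Your proposal is incomplete, and the gap is the one you yourself flag at the end: the ``hard'' direction of your exposure characterization --- that every $\lessdot$-minimal element $R$ of $\Rscr\setminus\omega(X)$ actually lies in $\Lscr(X)$ --- is never proved, and both surjectivity of $\omega$ onto $\Iscr$ and your route-realization argument for order reflection rest on it. This is not a minor technicality: surjectivity is the heart of the proposition. What you do establish (monotonicity by concatenating routes; the reordering lemma and the converse of Lemma~\ref{lm:invar_rot} via the commutation property~\refeq{commut}; injectivity through the meet) shows only that $\omega$ embeds $(\Sscr,\prec_F)$ order-isomorphically onto its \emph{image} in $(\Iscr,\subset)$; a priori that image could be a proper subfamily of the ideals, and the statement would fail. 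You are also right that \refeq{commut} alone cannot close the gap: it says that an already exposed rotation stays exposed after applying another one, not that a rotation incomparable to $R$ can never be the sole cause of $R$ becoming exposed.

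The paper avoids this issue by a different route: it proves directly that $\omega$ is a lattice homomorphism, i.e.\ $\omega(X\curlywedge Y)=\omega(X)\cap\omega(Y)$ and $\omega(X\curlyvee Y)=\omega(X)\cup\omega(Y)$. The combinatorial ingredient you are missing is the uniqueness of the path construction in the proof of Proposition~\ref{pr:XYXp}: starting from a vertex $w\in W$ with $M_w\neq X_w$, the alternating path and the resulting rotation in $\Gamma(M)$ are determined by $\Gamma(M)$ and $w$ alone, independently of the target matching. Hence, for $M=X\curlywedge Y$, the sets $W_1$ and $W_2$ of $W$-vertices where $M$ differs from $X$, respectively from $Y$, are disjoint --- a common vertex would yield a common rotation $L$ and a stable $M'\succ M$ with $M'\preceq X$ and $M'\preceq Y$, contradicting that $M$ is the meet --- so routes $M\to X$ and $M\to Y$ use disjoint rotation sets, and the two identities follow. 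Surjectivity then comes essentially for free: for distinct rotations, $R\lessdot R'$ fails exactly when some stable $X$ has $R'\in\omega(X)\not\ni R$ (immediate from Definition~2 and Lemma~\ref{lm:invar_rot}), and for an arbitrary ideal $I$ one intersects and unites such witness matchings --- which the homomorphism identities translate into meets and joins in $\Sscr$ --- to produce a matching whose $\omega$-image is exactly $I$. If you prefer to keep your architecture, this same canonical-path uniqueness in the active graph is precisely the tool needed to rule out your ``sole incomparable cause $R_0$'' configuration.
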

  \begin{proof}
~Consider stable matchings $X,Y\in\Sscr$ and take their meet $M:=X\curlywedge Y$ and join $J:=X\curlyvee Y$ in the lattice $(\Sscr,\prec)$. The key part of the proof consists in establishing the following relations:
  \begin{equation} \label{eq:MJ}
\omega(X)\cap \omega(Y)=\omega(M)\quad \mbox{and} \quad
\omega(X)\cup\omega(Y)=\omega(J);
  \end{equation}
in other words, we will show that $\omega$ determines a homomorphism of these lattices.

Let us show the left equality in~\refeq{MJ}. To this aim, we apply a method from the proof of Proposition~\ref{pr:XYXp}, considering the pairs $M\prec X$ and $M\prec Y$. Let $W_1$ ($W_2$) be the set of vertices $w\in W$ such that $M_w\ne X_w$ (resp. $M_w\ne Y_w$). We assert that $W_1\cap W_2=\emptyset$.

To show this, acting as the proof of Proposition~\ref{pr:XYXp}, we choose an arbitrary initial vertex $w\in W_1$ such that $m_w>_w x_w$ (where $\{m_w\}=M_w$ and $\{x_w\}=X_w$), and construct the corresponding alternating path $P$ in the active graph $\Gamma(M)$, starting with $w$ and finishing as soon as we get into an earlier passed vertex, thus obtaining some rotation $L\in\Lscr(M)$. Then for $M':=(M-L^-)\cup L^+$, we have $M\prec M'\preceq X$. Note that the path $P$ and rotation $L$ are constructed uniquely; they are determined by only the graph $\Gamma(M)$ and the initial vertex $w$ (not depending on $X$ in essence).

In case $W_1\cap W_2\ne\emptyset$, taking a common vertex $w\in W_1\cap W_2$, we would obtain the same rotation for both pairs $(M,X)$ and $(M,Y)$. But then the meet for $X$ and $Y$ should be greater than $M$ (namely, at least $M'$ as above); a contradiction.

Therefore, $W_1\cap W_2=\emptyset$. Continuing a construction for the pair $(M,X)$ in a similar way, we obtain a route $\Tscr$ from $M$ to $X$ such that for each intermediate matching $\tilde M$, the set of vertices $w\in W$ with $\tilde M_w\ne X_w$ is a subset of $W_1$. And similarly for a route $\Tscr'$ from $M$ to $Y$ and the set $W_2$. It follows that $\Rscr(\Tscr)\cap\Rscr(\Tscr')=\emptyset$. Now since $\omega(X)=\omega(M)\cup\Rscr(\Tscr)$ and $\omega(Y)=\omega(M)\cup\Rscr(\Tscr')$, we obtain the desired left equality in~\refeq{MJ}.

A proof of the right equality in~\refeq{MJ} is symmetric (this can be conducted by replacing $\prec_F$ by $\prec_W$ and reversing the rotational transformations).

Now the isomorphism of the lattices can be concluded from the fact that for rotations $R,R'\in\Rscr$, the relation $R\lessdot R'$ does not hold if and only if there exists a stable matching $X$ for which $R'\in\omega(X)\not\ni R$. (Roughly speaking, the lattice $(\Sscr,\prec)$ cannot be ``poorer'' than $(\Iscr,\subset)$.)
  \end{proof}

As a consequence, the set $\Sscr$ is in bijection with the set $\Ascr$ of anti-chains of $(\Rscr,\lessdot)$. (Recall that an \emph{anti-chain} of a poset $(P,<)$ is a set $A$ of pairwise incomparable elements. It determines the ideal $\{p\in P\colon\, \exists\, a\in A\,\vert\, p\le a\}$, and in turn is determined by an ideal, being the set of maximal elements of the latter.)


\section{Constructions}  \label{sec:build}

As is said in~\refeq{trassa}(i),  the number $|\Rscr|$ of rotations does not exceed the number $|E|$ of edges of the graph $G$; therefore, the rotational poset (as a graph) has size $O(|E|^2)$. (In addition, one can see that the sets of edges of the same sign in rotations are pairwise disjoint, which implies that the sum of the numbers of edges in rotations is at most $2|E|$.) In light of Lemma~\ref{lm:invar_rot}, the task of constructing the set of rotations $\Rscr$ is rather straightforward if the initial (least preferred for $F$) stable matching $\Xmin$ is available; namely, it suffices to compose an arbitrary full route, i.e. going from $\Xmin$ to $\Xmax$ (details will be specified later). A less trivial task is to find the preceding relations $\lessdot$ on pairs of rotations, which we defined implicitly via examination of the set of full routes (see Definition~2 in Sect.~\SEC{poset}). This section starts with an efficient method of finding these relations, after which efficient constructions of other relevant objects will be discussed.


\subsection{Finding the generating graph for $(\Rscr,\lessdot)$.}
\label{ssec:graph_H}

We say that a rotation $R\in\Rscr$ \emph{immediately precedes} a rotation $R'\in\Rscr$ if $R\lessdot R'$ and there is no $R''\in\Rscr$ such that $R\lessdot R''\lessdot R'$. Let $H=(\Rscr,\Escr)$ be the directed graph whose edge set $\Escr$ is formed by all pairs $(R,R')$ where $R$ immediately precedes $R'$. In other words, $H$ is the Hasse diagram of the poset $(\Rscr,\lessdot)$; the graph $H$ determines this poset via the natural reachability by directed paths and is useful to work with some applications as well.

An efficient construction of the graph $H$ is based on the following simple fact.

  \begin{lemma} \label{lm:immed_preced}
For $R\in\Rscr$, let $\Imax_{-R}$ denote the maximal ideal of the poset $(\Rscr,\lessdot)$ not containing $R$. Let $I':=\Imax_{-R}\cup\{R\}$. Then the rotation $R$ immediately precedes a rotation $R'$ if and only if $R'$ is a minimal element of the poset not contained in $I'$.
 \end{lemma}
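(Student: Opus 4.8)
The plan is to establish the biconditional by translating the statement about the preceding relation $\lessdot$ into a statement about ideals of the poset, using the standard correspondence between ideals and the order relation. The key tool is the observation already noted in the proof of Proposition~\ref{pr:omega}: for rotations $R,R''\in\Rscr$, the relation $R\lessdot R''$ fails if and only if there is some ideal $I\in\Iscr$ (equivalently, some stable matching) with $R''\in I\not\ni R$. Dually, $R\lessdot R''$ holds if and only if every ideal containing $R''$ also contains $R$. I would begin by recording this basic fact and introducing $\Imax_{-R}$, the maximal ideal avoiding $R$; its existence and uniqueness follow because the ideals not containing $R$ are closed under union (the union of ideals is an ideal, and if neither contains $R$ their union does not either), so they have a top element in $(\Iscr,\subset)$.

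Next I would characterize $\Imax_{-R}$ order-theoretically: an element $R''\ne R$ lies in $\Imax_{-R}$ precisely when $R\not\lessdot R''$, i.e. when $R''$ does not dominate $R$. Indeed, if $R\lessdot R''$ then any ideal containing $R''$ contains $R$, so $R''\notin\Imax_{-R}$; conversely, if $R\not\lessdot R''$ then by the fact above there is an ideal containing $R''$ but not $R$, and this ideal is contained in $\Imax_{-R}$, so $R''\in\Imax_{-R}$. Thus $\Imax_{-R}=\{R''\in\Rscr\colon R\not\lessdot R''\}$, and $I'=\Imax_{-R}\cup\{R\}=\{R''\colon R\not\lessdot R''\}\cup\{R\}$. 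Crucially $I'$ is itself an ideal: adding the single element $R$ to $\Imax_{-R}$ keeps downward closure, since every element strictly below $R$ already lies in $\Imax_{-R}$ (as $R\not\lessdot R'''$ for $R'''\lessdot R$, using antisymmetry and irreflexivity). So $I'$ is the smallest ideal containing both $R$ and everything that does not strictly dominate $R$.

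With $I'$ identified, I would prove the two directions. For the forward direction, suppose $R$ immediately precedes $R'$. Then $R\lessdot R'$, so $R'\notin\Imax_{-R}$, and $R'\ne R$, hence $R'\notin I'$. To see $R'$ is minimal among elements outside $I'$, take any $R''\lessdot R'$ with $R''\ne R'$; I must show $R''\in I'$. If $R''=R$ then $R''\in I'$ trivially. Otherwise $R''\lessdot R'$ and $R\lessdot R'$ with $R''\ne R$; since $R$ immediately precedes $R'$ there is no element strictly between, forcing $R\not\lessdot R''$ (otherwise $R\lessdot R''\lessdot R'$ contradicts immediacy), so $R''\in\Imax_{-R}\subseteq I'$. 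For the converse, suppose $R'$ is a minimal element not contained in $I'$. Since $R'\notin\Imax_{-R}$ we get $R\lessdot R'$, and $R'\ne R$. If $R$ did not immediately precede $R'$ there would be $R''$ with $R\lessdot R''\lessdot R'$; then $R''\ne R'$, so by minimality $R''\in I'$, yet $R\lessdot R''$ forces $R''\notin\Imax_{-R}$ and $R''\ne R$, so $R''\notin I'$, a contradiction.

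The main obstacle, and the step deserving the most care, is verifying that $I'=\Imax_{-R}\cup\{R\}$ is genuinely an ideal and that its complement's minimal elements are exactly the immediate successors of $R$; the delicate point is ruling out a ``between'' element $R''$ by simultaneously showing it must and must not lie in $I'$. Everything else reduces to the dictionary between $\lessdot$ and ideal membership supplied by Proposition~\ref{pr:omega}, so once the characterization $\Imax_{-R}=\{R''\colon R\not\lessdot R''\}$ is in hand, both implications follow by short chains of the defining properties of immediate precedence.
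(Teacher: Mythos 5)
Your proof is correct and follows essentially the same route as the paper: both identify $\Imax_{-R}$ as the complement of the filter $\Phi_R=\{\tilde R\colon R\lessdot\tilde R \mbox{ or } R=\tilde R\}$, so that the elements outside $I'$ are exactly those strictly above $R$, whose minimal elements are precisely the immediate successors of $R$. The paper states these facts tersely ("clearly"), whereas you fill in the supporting details — existence and uniqueness of $\Imax_{-R}$ via closure of $R$-avoiding ideals under union, the verification that $I'$ is an ideal, and the two implications — which is a more complete write-up of the same argument.
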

 \begin{proof}
Note that $\Imax_{-R}$ is the complement to $\Rscr$ of the set (``filter'') $\Phi_R$ formed by the rotations $\tilde R$ greater than or equal to $R$ (i.e. $R\lessdot \tilde R$ or  $R=\tilde R$). Clearly the minimal elements of $\Phi_R-\{R\}$ are exactly the rotations $R'$ for which $R$ is immediately preceding. These $R'$ constitute the set of minimal elements not contained in the ideal $I'$.
 \end{proof}

For $R\in\Rscr$, let $\Iscr^+_R$ denote the set of rotations immediately succeeding  $R$. Based on Proposition~\ref{pr:omega} and Lemma~\ref{lm:immed_preced}, we can efficiently construct the set $\Iscr^+_R$ as follows (provided that the matching $\Xmin$ is known).
  \medskip

\noindent\textbf{Constructing $\Iscr^+_R$.} ~Starting with $\Xmin$, we build, step by step, a route $\Tscr$ using all possible rotations except for $R$. Namely, at each step, for the current matching $X$, we construct the set $\Lscr(X)$ of rotations  associated with $X$ (cf. Sect.~\SSEC{rotation}), choose an arbitrary rotation $L\in\Lscr(X)$ different from $R$ and shift $X$ along $L$, obtaining a new current matching $X'$. In case $\Lscr(X)=\{R\}$, the first stage of the procedure finishes.

\noindent The second stage of the procedure consists in shifting the resulting matching $X$ of the first stage along $R$, obtaining a new matching $X'$. The set $\Lscr(X')$ of rotations associated with $X'$ is just output as the desired set $\Iscr^+_R$.
  \medskip

(It is easy to see that the matching $X$ obtained upon termination of the first stage corresponds to the ideal $I=\Imax_{-R}$, i.e. $\omega(X)=I$. The coincidence of 
$\Iscr^+_R$ with $\Lscr(X')$ is also obvious.)

The ``straightforward'' algorithm using the above procedure consists of $|\Rscr|$ \emph{big iterations}, each handling a new rotation $R$ in the list of all rotations. (Note that there is no need to form this list in advance, as it can be formed/updated in parallel with carrying out big iterations. The list is initiated with the set $\Lscr(\Xmin)$.) At each big iteration, the following basic problem is solved:
  \begin{itemize}
\item[(P):] given $X\in\Sscr$, construct the active graph $\Gamma(X)$ and extract in it the set $\Lscr(X)$.
  \end{itemize}

To construct $\Gamma(X)$, we scan the vertices $w\in W$, and for each $w$, we examine the edges $wf\in E_w$ in the order of decreasing their preferences $>_w$, starting with the edge next to $x_w$. For each edge $e=wf$, we decide whether it is interesting for $f$ under $X$ or not (by finding $C_f(X_f+e)$ and comparing it with $X_f$), and the first interesting edge (if exists) is assigned to be the $W$-active edge $a_w$. Along the way, the $F$-active edges and tandems are constructed. This gives the admissible graph $D(X)$. Applying the Cleaning procedure to $D(X)$ and decomposing the obtained active graph $\Gamma(X)$ into rotations are routine and can be performed in linear time $O(|E|)$. As a result,
 \begin{numitem1} \label{eq:solvP}
problem (P) reduces to performing $O(|E|)$ standard operations plus $O(|E|)$ calls to ``oracles'' $C_f$ ($f\in F$).
 \end{numitem1}
(Hereinafter we assume that each choice function $C_f$ is given implicitly, by use of  an ``oracle'' that, being asked of a subset $Z\subseteq E_f$, outputs the value $C_f(Z)$; for simplicity we think of such an operation as spending $O(1)$ ``oracle time''.)

A naive construction of the set $\Iscr^+_R$ for a fixed $R$ reduces to independent solutions of $O(|E|)$ instances of problem (P) (each concerning an element of an arising route $\Tscr$);  for this reason, the time bounds exposed in~\refeq{solvP} should be multiplied by $O(|E|)$. However, this procedure can be accelerated. To do so, for each vertex $w\in W$, we store the last edge $e=wf$ examined during previous steps. If this edge was active and took part in an already applied rotation, then the further examination in $E_w$ should be started with the edge next to $e$. And if $e$ was not used in previous rotations, then it continues to be active at the current step. This is justified by use of Lemma~\ref{lm:a1-ak}, which implies that if a rotation $L$ was contained in the current collection $\Lscr$ of rotations at some step but was not applied at that moment, then it continues to stay in $\Lscr$ on subsequent steps, until becomes applied.

This gives an improved procedure of constructing $\Iscr^+_R$, which examines each edge in $E_w$ at most once, and therefore, the whole work with one rotation $R\in\Rscr$ is performed in time $O(|E|)$ (including ``oracle calls''). Then, summing over $\Rscr$, we obtain the following

  \begin{prop} \label{pr:time_for_H}
When $\Xmin$ is available, the minimal generating graph $H=(\Rscr,\Escr)$ for the poset $(\Rscr,\lessdot)$ is constructed in time $O(|E|^2)$ (including ``oracle calls'').
 \end{prop}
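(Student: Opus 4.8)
The plan is to assemble the pieces already prepared and then sum over $\Rscr$. First I would observe that the graph $H=(\Rscr,\Escr)$ is completely determined once, for every $R\in\Rscr$, we know the set $\Iscr^+_R$ of rotations immediately succeeding $R$, since $\Escr=\{(R,R')\colon R\in\Rscr,\ R'\in\Iscr^+_R\}$. Thus building $H$ amounts to computing all the sets $\Iscr^+_R$. By \refeq{trassa}(i) there are at most $|E|$ rotations, so it suffices to show that each individual set $\Iscr^+_R$ can be produced in time $O(|E|)$ (including oracle calls); multiplying by $|\Rscr|\le|E|$ then yields the claimed $O(|E|^2)$ bound, which also matches the size of $H$ as a graph. (There is no need to precompute the list $\Rscr$: it is initialized with $\Lscr(\Xmin)$ and extended as rotations are discovered.)

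Next I would justify correctness of the ``Constructing $\Iscr^+_R$'' procedure. By Lemma~\ref{lm:immed_preced}, $\Iscr^+_R$ is the set of minimal elements of the poset not contained in $I'=\Imax_{-R}\cup\{R\}$. Running the first stage from $\Xmin$ applies every available rotation except $R$; since $R'\in\omega(X')$ forces $R\in\omega(X')$ whenever $R\lessdot R'$ (the ideal property of $\omega$ from Proposition~\ref{pr:omega}), no rotation above $R$ can ever be applied, and greedily applying all others terminates at the unique matching $X$ with $\omega(X)=\Imax_{-R}$. Shifting $X$ along $R$ gives $X'$ with $\omega(X')=I'$, so the rotations in $\Lscr(X')$ are exactly the minimal elements not in $I'$, i.e.\ $\Iscr^+_R$.

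The hard part will be the running-time bound for a single rotation $R$. A naive execution builds a route $\Tscr$ whose length may be $\Theta(|E|)$ and solves an instance of problem (P) at each of its matchings; since each instance costs $O(|E|)$ by \refeq{solvP}, this gives only $O(|E|^2)$ per rotation and $O(|E|^3)$ overall. The acceleration I would use keeps, for each $w\in W$, a pointer to the last edge of $E_w$ examined and never rescans an edge: when the current $W$-active edge at $w$ belongs to an applied rotation, scanning resumes at the next edge of $E_w$, and otherwise that edge remains $W$-active at the following step. The essential justification is the persistence property coming from Lemma~\ref{lm:a1-ak} (cf.\ the remark following it): a rotation lying in the current collection $\Lscr$ but not yet applied stays in $\Lscr$ until it is applied, so an unapplied active edge never needs to be recomputed from an earlier position.

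Consequently, over the whole route for a fixed $R$, each edge of each $E_w$ is examined at most once, and the total number of edge examinations together with the oracle calls used to test interestingness is $\sum_{w}|E_w|=O(|E|)$; hence one big iteration runs in $O(|E|)$ time. Summing over the $|\Rscr|\le|E|$ rotations then completes the proof, giving the total bound $O(|E|^2)$. I expect the amortized persistence argument of the previous paragraph to be the only genuinely delicate point, everything else being bookkeeping on top of the already-established correctness of problem (P) and of Lemma~\ref{lm:immed_preced}.
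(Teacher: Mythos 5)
Your proposal is correct and follows essentially the same route as the paper: the same per-rotation procedure based on Lemma~\ref{lm:immed_preced}, the same amortized acceleration with a per-vertex pointer into $E_w$ justified by the persistence consequence of Lemma~\ref{lm:a1-ak}, and the same summation over the at most $|E|$ rotations. Your added justification that the first stage necessarily terminates at the matching corresponding to $\Imax_{-R}$ makes explicit what the paper only asserts parenthetically, but it is the same argument.
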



\subsection{Finding the initial stable matching $\Xmin$.} \label{ssec:Xmin}

In order to construct $\Xmin$, one can use a method from~\cite[Sec.~3.1]{AG} intended for a wider class of models of stability in bipartite graphs. Below we describe its specification to our model CBM. An alternative method, based on the classical idea of ``deferred acceptance'', can be found in~\cite[Sec.~4.1]{FZ}.

The algorithm iteratively constructs triples of sets $(B^i,X^i,Y^i)$, $i=0,1,\ldots,i,\ldots$. Initially, one puts $B^0:=E$. In the input of a current, $i$-th, iteration, there is a set $B^i\subseteq E$ (already known), which is transformed on two stages of the iteration.
 \smallskip

On \emph{1st stage} of $i$-th iteration, $B^i$ is transformed into $X^i\subseteq E$ by applying the CF $C_w$ to each restriction $B^i_w={B^i}\rest{E_w}$ ($w\in W$), i.e. $X^i$ is the matching satisfying $X^i_w=C_w(B^i_w)$ for all $w\in W$. In other words, in our model with linear preferences and unit quotas in $W$, for each vertex $w\in W$, in the set $B^i_w$, we take the most preferred (w.r.t. $>_w$) edge $e$ and put $X^i_w:=\{e\}$. In case $B^i_w=\emptyset$, one puts $X^i_w:=\emptyset$ (and the vertex $w$ becomes deficit). Thus, $X^i$ obeys the quotas for all vertices of the part $W$.
 \smallskip

On \emph{2nd stage} of $i$-th iteration, the matching $X^i$ is transformed into $Y^i\subseteq E$ by applying the CF $C_f$ to each restriction $X^i_f= {X^i}\rest{E_f}$ ($f\in F$), i.e. $Y^i$ is the matching satisfying $Y^i_f=C_f(X^i_f)$ for all $f\in F$. (Therefore, $Y^i$ is acceptable for all vertices; however, it need not be stable.)

The obtained sets $X^i,Y^i$ are then used to generate the input set $B^{i+1}$ for the next iteration. More precisely:
  \begin{numitem1} \label{eq:Bi+1}
~$B^{i+1}$ consists of all $e\in E$ such that $e\in B^i$, and either $e\in
X^i\cap Y^i$, or $e\notin X^i\cup Y^i$ (in other words, $e\in B^i$ does not get into $B^{i+1}$ if $e$ is in $X^i$ but not in $Y^i$).
  \end{numitem1}

Then $B^{i+1}$ is transformed into $X^{i+1}$ and $Y^{i+1}$ as described above, and so on. The process terminates when at the current, $p$-th say, iteration, we obtain the equality $Y^p=X^p$ (equivalently, $B^{p+1}=B^p$). One can see that $B^0\supset B^1\supset\cdots\supset B^p$, implying that the process is finite and the number of iterations does not exceed $|E|$.

By a result proved in~\cite{AG}, the following takes place.
 \begin{prop} \label{pr:Xp}
The resulting matching $X^p$ is stable and optimal for $W$, i.e. $X^p=\Xmin$.
 \end{prop}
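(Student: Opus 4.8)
The plan is to show that the iterative procedure reaches a fixed point $X^p=Y^p$ which is stable, and then separately that this fixed point is the $W$-optimal ($F$-worst, hence $\Xmin$) stable matching. I would split the argument into three parts: (a) termination and acceptability of $X^p$; (b) stability of $X^p$; (c) $W$-optimality.

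Part (a) is essentially already supplied by the excerpt: the chain $B^0\supset B^1\supset\cdots$ is strictly decreasing until it stabilizes, so the process halts after at most $|E|$ iterations at some $p$ with $Y^p=X^p$. At termination, $X^p_f=C_f(X^p_f)$ holds for every $f\in F$ (since $Y^p_f=C_f(X^p_f)=X^p_f$), and $X^p_w=C_w(B^p_w)$ is acceptable for every $w\in W$ by definition of a CF. Hence $X^p\in\Ascr$.

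For part (b), I would argue by contradiction: suppose some edge $e=wf\in E-X^p$ blocks $X^p$, i.e. $e$ is interesting for both $w$ and $f$ under $X^p$. The key is to track where $e$ ``was'' in the final iteration. Since $e$ is interesting for $w$ (with unit quota) under $X^p$, we have $e>_w x_w$ where $\{x_w\}=X^p_w=C_w(B^p_w)$; but $C_w$ picks the $>_w$-largest edge of $B^p_w$, so $e>_w x_w$ forces $e\notin B^p_w$, i.e. $e\notin B^p$. On the other hand, $e$ being interesting for $f$ under $X^p=Y^p$ should contradict the fact that $e$ was discarded from $B$ at some earlier iteration. The crucial lemma I would establish (or extract from the monotone structure of~\refeq{Bi+1}) is the invariant that an edge $e=wf$ is removed from $B$ at iteration $i$ precisely when $e\in X^i$ but $e\notin Y^i=C_f(X^i_f)$, i.e. when $f$ rejected $e$; and that once $f$ rejects $e$ against a set $X^i_f$, substitutability (A2) together with the nesting of the relevant choice sets guarantees $e$ remains uninteresting for $f$ under all later, ``more competitive'' matchings $X^j_f$ ($j>i$), in particular under $X^p_f$. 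This is the heart of the proof and the main obstacle: one must show that rejection is \emph{permanent} in the sense that $e\notin C_f(X^p_f+e)$, using (A2) and the fact that the edges surviving in $B$ only get more preferred from $f$'s viewpoint as $i$ grows. Combining the two observations, $e$ cannot be interesting simultaneously for both endpoints, contradicting that $e$ blocks $X^p$.

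For part (c), $W$-optimality, the cleanest route is to prove by induction on $i$ that $B^i$ contains every stable matching, i.e. $X\subseteq B^i$ for all $X\in\Sscr$ and all $i$. The base case $B^0=E$ is trivial. For the inductive step I would show that if $X\in\Sscr$ satisfies $X\subseteq B^i$, then no edge of $X$ is discarded when forming $B^{i+1}$: an edge $e=wf\in X$ discarded at step $i$ would satisfy $e\in X^i_w$ (so $e$ is $w$'s favourite in $B^i_w\supseteq X_w$) yet $e\notin C_f(X^i_f)$; using stability of $X$ and substitutability one derives that some edge of $X$ incident to $w$ or $f$ would block $X$, a contradiction. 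Granting this invariant, at termination every stable $X$ satisfies $X_w\subseteq B^p_w$, whence $w$'s matched edge $x^p_w=C_w(B^p_w)$ is $\ge_w$ than $w$'s edge in $X$; thus $X^p\succeq_W X$ for every stable $X$, i.e. $X^p$ is $W$-optimal. By the polarity property~\refeq{stab-mix}(b) this is exactly the $F$-minimal matching $\Xmin$, as claimed. As noted, the subtle point throughout is the correct bookkeeping of the nested choice sets so that (A1)--(A3) can be applied to transfer rejections and blocking across iterations; I would lean on Lemma~\ref{lm:a1-ak}-style reasoning and the invariant $X\subseteq B^i$ to keep this manageable, rather than re-deriving Alkan's general results from~\cite{AG} in full.
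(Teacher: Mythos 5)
Your proposal is correct in outline, but it takes a genuinely different route from the paper -- which in fact gives no internal proof of Proposition~\ref{pr:Xp} at all: there the procedure is identified as the boolean specialization of the Alkan--Gale iteration from \cite[Sec.~3.1]{AG}, and stability plus $W$-optimality are imported wholesale from Theorems~1 and~2 of \cite{AG}. You instead re-derive that result from scratch in the boolean case. What the citation buys is brevity and coverage of the general (real-valued) setting; what your route buys is a self-contained, elementary argument, provided the two steps you leave as sketches are actually carried out -- and both do go through.

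Two points need care. First, for the permanence-of-rejection lemma in part~(b), substitutability (A2) alone does not suffice, because the sets $X^j_f$ are not nested in $j$ (new proposals arrive, rejected ones leave). The clean fix is the cumulative-proposal invariant: setting $A^i_f:=\bigcup_{j\le i}X^j_f$, one proves by induction that $C_f(A^i_f)=Y^i_f$, using path independence~\refeq{plott} together with the observation that $Y^i_f\subseteq X^{i+1}_f$ (an edge held by $f$ stays in $B^{i+1}$ by rule~\refeq{Bi+1} and remains its $W$-endpoint's best edge there). Then for an edge $e$ rejected at step $i$ and absent from $X^p$, one has $C_f(A^p_f)=X^p_f\subseteq X^p_f+e\subseteq A^p_f$, so consistency (A1) -- not (A2) -- gives $C_f(X^p_f+e)=X^p_f\not\ni e$, which is exactly the non-interestingness you need. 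Second, in part~(c) there is a slip of wording: no edge of $X$ can block $X$, since blocking edges lie in $E-X$ by definition. The contradiction is rather this: if $e=wf\in X\subseteq B^i$ were rejected by $f$ at step $i$, then (A2) gives $e\notin C_f(X_f\cup X^i_f)$, hence $C_f(X_f\cup X^i_f)\ne X_f$, and path independence then forces some $e'\in X^i_f-X_f$ to be interesting for $f$ under $X_f$; since $e'=C_{w'}(B^i_{w'})$ and $X_{w'}\subseteq B^i_{w'}$ by your invariant, this $e'$ is also interesting for its endpoint $w'$ under $X$, so $e'$ blocks $X$. With these repairs, the invariant $X\subseteq B^i$ holds for every stable $X$, giving $X^p\succeq_W X$ at termination, and polarity~\refeq{stab-mix}(b) converts $W$-optimality into $X^p=\Xmin$ exactly as you say.
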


It should be noted that for a general model considered in~\cite{AG}, which can deal with real-valued functions on $E$, the process of transforming the corresponding functions $(b^i,x^i,y^i)$ can be infinite, though it always converges to some triple $(\hat b,\hat x,\hat y)$ satisfying $\hat x=\hat y$; one proves (in Theorems~1 and~2 in~\cite{AG}) that the limit function $\hat x$ is stable and optimal for the appropriate part of vertices. In our particular boolean case, we simply obtain Proposition~\ref{pr:Xp}.

Next, observe that at $i$-th iteration of the algorithm, the set $X^i$ can be formed from $B^i$ in $O(|W|)$ time, by taking the first elements in the restrictions $B^i_w$, $w\in W$. The sets $X^i_f$ for all $f\in F$ can also be constructed in time $O(|W|)$, and the operator $C_f$ is applied to each $X^i_f$ at most once (to compute $Y^i_f$). In light of this, the algorithm can be implemented so as to obtain the following estimate:
  \begin{numitem1} \label{eq:time_Xmin}
the matching $\Xmin$ can be constructed in time $O(|E| |V|)$ (including ``oracle calls'').
  \end{numitem1}

This together with Proposition~\ref{pr:time_for_H} gives the following result.

  \begin{theorem} \label{tm:time_poset}
The poset of rotations $(\Rscr,\lessdot)$ can be constructed in time $O(|E|^2)$ (including ``oracle calls'').
 \end{theorem}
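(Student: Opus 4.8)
The plan is to assemble the theorem from the two computational building blocks established in this section, observing that together they account for every standard operation (and oracle call) needed to produce the poset. The construction proceeds in two phases: first compute the initial matching $\Xmin$, then, with $\Xmin$ in hand, build the generating graph $H=(\Rscr,\Escr)$, which encodes $(\Rscr,\lessdot)$ via directed reachability.

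For the first phase, I would invoke the algorithm of Section~\SSEC{Xmin}: by Proposition~\ref{pr:Xp} its output is exactly the $W$-optimal stable matching $\Xmin$, and by the estimate~\refeq{time_Xmin} it runs in time $O(|E|\,|V|)$, oracle calls included. For the second phase, I would feed $\Xmin$ into the procedure of Section~\SSEC{graph_H}; by Proposition~\ref{pr:time_for_H} this constructs the minimal generating (Hasse) graph $H$ in time $O(|E|^2)$, again counting oracle calls. Since $H$ is the Hasse diagram of $(\Rscr,\lessdot)$, it determines the poset through reachability by directed paths, so producing $H$ is tantamount to producing the poset itself.

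It then remains only to combine the two bounds. The total running time is $O(|E|\,|V|)+O(|E|^2)$. Recall from Section~\SEC{def} that $G$ may be assumed connected and free of multiple edges, whence $|V|-1\le|E|$ and thus $|V|=O(|E|)$. Consequently $O(|E|\,|V|)=O(|E|^2)$, and the sum collapses to $O(|E|^2)$, as claimed.

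I do not anticipate a genuine obstacle here: all the substantive work has already been discharged in the earlier results, namely the correctness of the iterative scheme yielding $\Xmin$, the acceleration (justified via Lemma~\ref{lm:a1-ak}) that examines each edge of $E_w$ at most once per big iteration, and the use of Lemma~\ref{lm:immed_preced} to read off the immediate successors $\Iscr^+_R$. The only point worth an explicit sentence is the reduction $O(|E|\,|V|)\subseteq O(|E|^2)$ through $|V|=O(|E|)$, which guarantees that the dominating term is the Hasse-diagram construction rather than the preprocessing step producing $\Xmin$.
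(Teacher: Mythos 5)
Your proposal is correct and matches the paper's own proof: the theorem there is obtained precisely by combining the $O(|E|\,|V|)$ bound~\refeq{time_Xmin} for computing $\Xmin$ (via Proposition~\ref{pr:Xp}) with Proposition~\ref{pr:time_for_H} for building the Hasse diagram, the term $O(|E|\,|V|)$ being absorbed into $O(|E|^2)$ since $G$ is connected. Your explicit remark that $|V|=O(|E|)$ justifies the absorption the paper leaves implicit, but the argument is otherwise identical.
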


\noindent\textbf{Remark 2.} The above constructions of rotations and their poset can be extended to CBM with a graph $G=(V,E)$ containing  multiple edges. To do so, we use the reduction to a graph $G'=(V',E')$ without multiple edges obtained by replacing edges $e$ by the subgraphs $K_e$ as described in Remark~1 (in Sect.~\SEC{def}); here the set $U$ of edges to be replaced contains at least one edge from any two multiple edges. Let us say that stable matchings in $G'$ are \emph{close} if they differ only within the cycles $O_e$ for some $e\in U$ (taking into account case~($\ast$) mentioned in Remark~1). Then the set $\Sscr$ of stable matchings for $G$ is isomorphic to the set of classes of closeness for $G'$. According to this, the set  $\Rscr'$ of rotations for $G'$ is partitioned into two subsets $\Rscr'_1$ and $\Rscr'_2$, where each rotation in $\Rscr'_2$ corresponds to a cycle $O_e$ for some $e\in U$ (and $\Rscr'_1$ has no rotation of this sort). One can see that if at some moment of constructing a route for $G'$, one applies a rotation containing an edge from $K_e$ for some $e\in U$, then the matching becomes stabilized within $K_e$, i.e. no subsequent rotation can use edges from $K_e$. (This follows from the structure of $K_e$ and the fact that if an edge from the current matching is deleted by applying a rotation, then subsequently this edge cannot enter the matching again.)

As a consequence, each rotation in $\Rscr'_2$ is a maximal element of the poset $(\Rscr',\lessdot')$. In turn, the rotations for $G$ one-to-one correspond to the elements of $\Rscr'_1$ (being the images of the latter under the replacement of the subgraphs $K_e$ by the edges $e$). So the rotational poset $(\Rscr,\lessdot)$ for $G$ is isomorphic to the restriction of the poset $(\Rscr',\lessdot')$ on the set $\Rscr'_1$. Since $|E'|< 8|E|$, the poset for $G$ is constructed in time $O(|E|^2)$, in view of Theorem~\ref{tm:time_poset}.


\section{Affine representation and stable matchings of minimum cost}  \label{sec:affine}

The bijection $\omega$ figured in Proposition~\ref{pr:omega} enables us to show an affine representation of the lattice $(\Sscr,\prec_F)$, by an analogy with that done in~\cite{FZ} for the general boolean stability problem, or in~\cite{MS} for the stable allocation problem.

Recall that any rotation $R\in\Rscr$ arises as a cycle of a certain active graph $\Gamma(X)$, and the set of its edges has a fixed partition denoted as  $(R^+,R^-)$, formed by  $W$- and $F$-active edges, respectively. We associate with $R$ the characteristic vector  $\beta^R\in\Rset^E$ taking value 1 for $e\in R^+$, \,$-1$ for $e\in R^-$, and 0 for the remaining edges.

Besides $\Rset^E$, we also will deal with the space $\Rset^\Rscr$ whose coordinates are indexed by the rotations; in this case we denote the unit base vector related to a rotation $R$ as $\langle R\rangle$.

Also for a subset (matching) $X\subseteq E$, its characteristic 0,1-vector $\Rset^E$ is denoted as $\chi^X=\chi_E^X$, and similarly for a subset $I\subseteq\Rscr$, its characteristic 0,1-vector in $\Rset^\Rscr$ is denoted as $\chi^I=\chi^I_\Rscr$. 

Let $A\in \Rset^{E\times\Rscr}$ be the matrix whose columns are formed by the vectors $\beta^R$, $R\in\Rscr$. Proposition~\ref{pr:omega} implies that 
  \begin{numitem1} \label{eq:affine}
the correspondence $\lambda\in\Rset^\Rscr\stackrel{\gamma}{\longmapsto} x\in\Rset^E$, where $x$ is defined as $\chi^{\Xmin}+ A\lambda$, generates a bijection between the characteristic vectors $\chi_\Rscr^I$ of ideals $I$ of the poset $(\Rscr,\lessdot)$ and the characteristic vectors $\chi_E^X$ of stable matchings $X\in\Sscr$.
 \end{numitem1}
(Here $\gamma$ corresponds to a natural extension of the map $\omega^{-1}$ in Proposition~\ref{pr:omega}.)
 
By reasonings in Sect.~\SEC{build}, the matrix $A$ can be constructed efficiently, in time $O(|E|^2)$ (including ``oracle calls''). Due to this, the affine representation of the lattice $(\Sscr,\prec_F)$ exhibited in~\refeq{affine} (where the order $\prec_F$ is agreeable with the inclusion $\subset$ in the lattice of ideals for $(\Rscr,\lessdot)$) can be useful to reduce certain problems on stable matchings to ``simpler'' ones on ideals of the rotational poset.
 
First of all, notice that the affine map $\gamma$ in~\refeq{affine} sends the convex hull $\convex(\Iscr)\subset \Rset^\Rscr$ of the set $\Iscr$ of ideals of the rotational poset onto the convex hull $\convex(\Sscr)\subset \Rset^E$ of the set $\Sscr$ of stable matchings. (Here to simplify the description, we identify ideals of the poset as well as stable matchings with their characteristic vectors.) The polytope $\convex(\Iscr)$ is full dimensional in $\Rset^\Rscr$ (since each unit base vector $\langle R\rangle$ can be expressed as the difference of characteristic vectors of two ideals in $\Iscr$). It is described by the inequalities
  \begin{eqnarray}
  0\le\lambda(R)\le 1,\;\; && R\in\Rscr;  \label{eq:conv72} \\
  \lambda(R)\ge \lambda(R'),\;\; && R,R'\in\Rscr,\;  R\lessdot R'. \label{eq:conv73}
   \end{eqnarray} 
   
This linear system matches a description of the so-called \emph{order polytope} $\Pscr_Q$ of a finite poset $Q$ introduced by Stanley~\cite{stan}; in our case $Q=(\Rscr,\lessdot)$ and $\Pscr_Q=\convex(\Iscr)$. The hyperfaces (facets) in $\convex(\Iscr)$ are expressed by the inequatities of three types: 
  \begin{numitem1} \label{eq:facets}
(a) $\lambda(R)\le 1$ for minimal $R$; (b) $\lambda(R)\ge 0$ for maximal $R$; and (c) $ \lambda(R)\ge \lambda(R')$ if $R$ immediately precedes $R'$.
 \end{numitem1}
 
As to vertices, we observe that
 \begin{numitem1} \label{eq:vertex_ord}
the vertices of $\convex(\Iscr)$ one-to-one correspond to the ideals in $\Iscr$.
 \end{numitem1}
(Indeed, each ideal $I\in\Iscr$ determines a vertex of $\convex(\Iscr)$, since $\chi^I$ is the unique vector maximizing the inner product $a\chi^{I'}$ over all ideals $I'\in\Iscr$, where $a$ takes value 1 on the rotations $R\in I$, and $-1$ on the rotations $R\in \Rscr-I$. The converse is obvious.)

An important property of the map $\gamma$ in~\refeq{affine} follows from the fact that
  \begin{numitem1} \label{eq:full_rank}
the matrix $A$ has the full column rank.
 \end{numitem1}
 
Such a fact was shown in~\cite[Th.~1.4]{FZ} for a general boolean case (where CFs for all vertices obey axioms (A1),(A2),(A3)). A proof for our model CBM is relatively simple and we can now give it for the completeness of our description.
  \medskip
  
\noindent\textbf{Proof of~\refeq{full_rank}.}
Let $N:=|\Rscr|$. Number the rotations as $R(1),\ldots,R(N)$ following the rule:  $R(i)\lessdot R(j)$ implies $i<j$ (such a numbering is known as a ``topological sorting'' of the vertices of an acyclic digraph). For each $i=1,\ldots,N$, choose in $R(i)$ one edge in the ``negative'' part $R(i)^-$ and denote it as $e_i$. Then the obtained numerations satisfy:
  \begin{numitem1} \label{eq:numerat}
(a) all edges $e_1,\ldots,e_N$ are different; and (b) for any $1\le i<j\le N$, the value of $\beta^{R(j)}$ on $e_i$ is zero.
  \end{numitem1}
 
These properties follow from the fact that for a rotation $R$ occurring in the active graph $\Gamma(X)$ for a matching $X$, in each vertex $w\in W$ contained in $R$, the matching edge $e=wf\in X$ is preferred to the active edge $a=wf'$, i.e. $e>_w a$. We have $e\in R^-$ and $a\in R^+$, and by applying the rotation $R$ to $X$, the edge $e$ of the current matching is replaced by the less preferred edge $a$. Therefore, in any full route, after the usage of the rotation $R$, none of subsequent rotations could contain the edge $e$. This implies both properties in~\refeq{numerat}. (In fact, one can observe the following: any edge $e$ belongs to at most two rotations, and if $e$ belongs to $R(k)$ and $R(i)$ with $k<i$, then these rotations are comparable as $R(k)\lessdot R(i)$, and there holds $e\in R(k)^+$ and $e\in R(i)^-$.)

Now rearrange the orders of columns and rows of the matrix $A$ so that the columns $\beta^{R(i)}$ follow by increasing the indexes $i$ (from left to right), and the first $N$ rows correspond to the edges $e_1,\ldots,e_N$ following by increasing their indexes (from top to bottom). Then~\refeq{numerat} implies that the submatrix formed by the first $N$ rows is lower triangular, with coefficients $-1$ on the diagonal. This gives~\refeq{full_rank}. \hfill$\qed$
  \medskip
  
From~\refeq{affine} and~\refeq{full_rank} we obtain 

  \begin{corollary} \label{cor:congruent}
$\dim(\convex(\Sscr))=\dim(\convex(\Iscr))=|\Rscr|$, and the polytope $\convex(\Sscr)$ is affinely congruent to the order polytope $\convex(\Iscr)$ of the poset $(\Rscr,\lessdot)$.
  \end{corollary}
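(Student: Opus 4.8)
The plan is to derive Corollary~\ref{cor:congruent} as a direct consequence of the affine representation~\refeq{affine} together with the full-rank property~\refeq{full_rank}, treating both assertions (the dimension equality and the affine congruence) as two faces of the same underlying linear-algebraic fact. First I would recall that, by~\refeq{affine}, the map $\gamma(\lambda)=\chi^{\Xmin}+A\lambda$ sends the characteristic vectors of ideals of $(\Rscr,\lessdot)$ bijectively onto the characteristic vectors of stable matchings; by linearity it therefore carries $\convex(\Iscr)$ onto $\convex(\Sscr)$, as already noted in the text preceding the corollary.

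The key observation is that $\gamma$ is an injective affine map. Injectivity is exactly what~\refeq{full_rank} provides: since the columns of $A$ are linearly independent, the linear part $\lambda\mapsto A\lambda$ of $\gamma$ has trivial kernel, so $A\lambda=A\lambda'$ forces $\lambda=\lambda'$. An injective affine map is by definition an affine isomorphism onto its image, and it preserves dimension of any subset it is applied to. Applying this to the full-dimensional polytope $\convex(\Iscr)\subset\Rset^\Rscr$ (full-dimensionality was established just before~\refeq{facets}), we get $\dim(\convex(\Sscr))=\dim(\gamma(\convex(\Iscr)))=\dim(\convex(\Iscr))=|\Rscr|$, yielding the claimed chain of equalities. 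The same injectivity shows that $\gamma$ restricts to an affine bijection $\convex(\Iscr)\to\convex(\Sscr)$ whose inverse is affine as well (on the affine hull), which is precisely the statement that the two polytopes are affinely congruent.

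I would phrase the argument as: (i) note $A$ has full column rank $|\Rscr|$ by~\refeq{full_rank}, hence $\gamma$ is an injective affine map; (ii) conclude that $\gamma$ gives an affine isomorphism from $\convex(\Iscr)$ onto $\convex(\Sscr)$, so the polytopes are affinely congruent; (iii) since $\convex(\Iscr)$ is full-dimensional in $\Rset^\Rscr$, read off $\dim(\convex(\Iscr))=|\Rscr|$, and transport this through the affine isomorphism to obtain $\dim(\convex(\Sscr))=|\Rscr|$. Each of these steps is a one- or two-line invocation of standard facts about affine maps.

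I do not anticipate a genuine obstacle here, since the real combinatorial work has already been discharged in establishing~\refeq{affine} (via Proposition~\ref{pr:omega}) and in proving~\refeq{full_rank}. The only point requiring mild care is to state injectivity cleanly: one must pass from ``full column rank'' to ``the affine map $\gamma$ is injective,'' which hinges on $A$ acting injectively on $\Rset^\Rscr$, and then to ``affine congruence,'' recalling that affine congruence means there is an affine bijection between the polytopes (equivalently, an invertible affine map of the ambient spaces, or of their affine hulls, taking one onto the other). Since the domain $\convex(\Iscr)$ is full-dimensional, its affine hull is all of $\Rset^\Rscr$, so $\gamma$ can be regarded as the restriction of a globally injective affine map, and the congruence is immediate.
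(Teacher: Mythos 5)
Your proposal is correct and matches the paper's own derivation: the paper obtains this corollary directly from~\refeq{affine} and~\refeq{full_rank}, exactly the two ingredients you invoke. You merely spell out the standard linear-algebra steps (full column rank $\Rightarrow$ injectivity of $\gamma$ $\Rightarrow$ affine isomorphism onto the image, preserving dimension), which is all the paper leaves implicit.
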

  
In particular, the vertices of $\convex(\Sscr)$ are formed by the stable matchings in $\Sscr$ and one-to-one correspond to the ideals in $\Iscr$, the polytope $\convex(\Sscr)$ has $O(|\Rscr|^2)$ facets, all of them are the images by $\gamma$ of the facets of $\convex(\Iscr)$ (pointed out in~\refeq{facets}) and can be efficiently listed.
 \smallskip
 
In the rest of this section, we consider the \emph{problem of finding a stable matching of minimum cost}:
 \begin{numitem1} \label{eq:min-cost}
given \emph{costs} $c(e)\in\Rset$ of edges $e\in E$, find a stable matching  $X\in\Sscr$ having the minimum total cost $c(X):=\sum(c(e)\colon e\in E)$.
  \end{numitem1}
  
Note that since all stable matchings $X$ have the same size $|X|$, the function $c$ can be given up to a constant; in particular, it can be positive. Replacing $c$ by $-c$, we obtain an equivalent problem of maximizing $c(X)$ over $X\in\Sscr$. 

A nice method elaborated in~\cite{ILG} to minimize a linear function over stable marriages and subsequently successfully applied by a number of authors to some other models of stability consists in a reduction to a linear minimization problem on ideals of a related poset (when such a poset exists and can be constructed efficiently), and then the latter problem is reduced to the standard minimum cut problem, by a method due to Picard~\cite{pic}. To make our paper more self-contained, we briefly explain how such an approach works in our case, to efficiently solve~\refeq{min-cost} (below we prefer to follow a description as in the work~\cite{karz}).

In the beginning we compute the cost $c\beta^R$ ($=c(R^+)-c(R^-)$) of each rotation $R\in\Rscr$. Then for each $X\in\Sscr$ and the corresponding ideal $I:=\omega(X)$, we have
  $$
  c(X)=c(\Xmin)+\sum(c\beta^R\colon R\in I).
  $$

This leads to the following equivalent problem dealing with rotations $R$ endowed with weights (or costs) $\zeta(R):=c\beta^R$:
  \begin{numitem1} \label{eq:min-cost-ideal}
in the poset $(\Rscr,\lessdot)$, find an ideal $I\in\Iscr$ minimizing the total weight $\zeta(I):=\sum(\zeta(R)\colon R\in I)$.
 \end{numitem1}
 
It is convenient to slightly extend the setting of the latter problem, by considering an arbitrary finite directed graph $H=(V_H,E_H)$ and a weight function $\zeta: V_H\to \Rset$. It is required to find a closed set of vertices $X\subseteq V_H$ having the minimal weight $\zeta(X)$; we call this \emph{problem~$(\ast)$}. Here a set $X$ is called \emph{closed} if $H$ has no edges going from $V_H-X$ to $X$. (Clearly, in the graph of a poset, the closed sets are exactly the ideals.)

Following~\cite{pic}, problem~$(\ast)$ reduces to the minimum cut problem in the directed graph $\hat H=(\hat V,\hat E)$ with the edge capacity function $h$ that are obtained from $H,\zeta$ by: 

(a) adding two vertices: ``source'' $s$ and ``sink'' $t$; 

(b) adding the set $E^+$ of edges $(s,v)$ for all vertices $v$ contained in $V^+:=\{v\in V_H\colon \zeta(v)>0\}$;

(c) adding the set $E^-$ of edges $(u,t)$ for all vertices $u$ contained in $V^-:=\{u\in V_H\colon \zeta(u)<0\}$;

(d) assigning the capacities $h(s,v):=\zeta(v)$ for $v\in V^+$, \; $h(u,t):=|\zeta(u)|$ for $u\in V^-$, and $h(e):=\infty$ for all $e\in E_H$.

Recall that by an $s$--$t$ \emph{cut} in $\hat H$ one is meant the set of edges $\delta(A)$ going from a subset of vertices $A\subset \hat V$ satisfying $s\in A\not\ni t$ to its complement $\hat V-A$; the capacity of this cut is defined to be the value $h(\delta(A)):=\sum(h(e)\colon e\in\delta(A))$. One can see that $\delta(A)$ has the minimum capacity among all $s$--$t$ cuts if and only if $X:=V_H-A$ is a closed set with the minimum weight $\zeta(X)$ in $H$.

Indeed, for an $s$--$t$ cut $\delta(A)$, the value $h(\delta(A))$ is finite if and only if $\delta(A)$ contains no edges from $H$ (in view of the infinite capacity of such edges). It follows that $\delta(A)\subseteq E^+\cup E^-$ and that the set $X$ is closed. Then
   \begin{multline*}
h(\delta(A))=h(\delta(A)\cap E^+)+h(\delta(A)\cap E^-) \\
= \zeta(X\cap V^+)+\sum(|\zeta(u)|\colon u\in (V_H-X)\cap V^-)\\
  =\zeta(X\cap V^+)+ \zeta(X\cap V^-)-\zeta(V^-)=\zeta(X)-\zeta(V^-).
  \end{multline*}

Thus, $\zeta(X)$ differs from $h(\delta(A))$ by the constant $\zeta(V^-)$, whence the desired property follows, and we can conclude with the following
 \begin{theorem} \label{tm:min_cost}
The problem on finding a stable matching of minimum cost~\refeq{min-cost} for the model CBM in question is solvable in strongly polynomial time (estimating the number of standard operations and ``oracle calls'').
  \end{theorem}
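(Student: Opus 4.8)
The plan is to assemble the chain of reductions sketched in the discussion immediately preceding the statement, verifying that each link can be carried out in strongly polynomial time and, crucially, that the running time of the optimization phase does not depend on the magnitudes of the costs $c(e)$. The overall strategy is the sequence: minimum-cost stable matching $\to$ minimum-weight ideal of the rotational poset $\to$ minimum-weight closed set (problem~$(\ast)$) $\to$ minimum $s$--$t$ cut.

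First I would invoke Theorem~\ref{tm:time_poset} to construct, in time $O(|E|^2)$ including oracle calls, the matching $\Xmin$ and the poset $(\Rscr,\lessdot)$ together with its Hasse diagram $H=(\Rscr,\Escr)$. As a by-product of the construction of the rotations, the partition $(R^+,R^-)$ of each $R\in\Rscr$ is available, so the weights $\zeta(R):=c\beta^R=c(R^+)-c(R^-)$ can be computed for all $R$ in $O(|E|)$ additional standard operations (the edge sets of rotations of the same sign being pairwise disjoint, the total edge count over all rotations is $O(|E|)$). Note that oracle calls are confined to this preprocessing phase; the subsequent combinatorial optimization uses only the explicitly computed data $H$ and $\zeta$.

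Next I would use the cost identity
  $$
  c(X)=c(\Xmin)+\sum(\zeta(R)\colon R\in\omega(X)),
  $$
valid for every $X\in\Sscr$ with ideal $\omega(X)$ by~\refeq{affine}, to reduce~\refeq{min-cost} to problem~\refeq{min-cost-ideal} of finding a minimum-weight ideal of $(\Rscr,\lessdot)$; since $c(\Xmin)$ is a fixed constant, minimizing $c(X)$ over $\Sscr$ amounts to minimizing $\zeta(I)$ over $I\in\Iscr$. Regarding $\Iscr$ as the family of closed sets of the directed graph $H$, this is an instance of problem~$(\ast)$, which, following Picard~\cite{pic}, I would convert into the minimum $s$--$t$ cut problem in the auxiliary graph $\hat H=(\hat V,\hat E)$ described in items~(a)--(d) above. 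The computation given just before the statement shows that $X=V_H-A$ is a closed set of minimum weight precisely when $\delta(A)$ is a minimum cut, so an optimal ideal $I^\ast$ is recovered as the complement in $\Rscr$ of the source side of the cut, whereupon the optimal stable matching is obtained via the affine map $\gamma$ in~\refeq{affine} as $\chi^{\Xmin}+A\chi^{I^\ast}$.

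There is no genuine obstacle here---the substantive work was already done in building the rotational poset and its affine consequences---so the remaining point to certify is the complexity, and in particular strong polynomiality. The graph $\hat H$ has $|\hat V|=O(|\Rscr|)=O(|E|)$ vertices and $|\hat E|=O(|\Escr|+|\Rscr|)=O(|E|^2)$ edges, and the infinite capacities on the edges of $H$ may be replaced by any value exceeding $\sum_e|c(e)|$ without altering the finite cuts; applying any strongly polynomial maximum-flow/minimum-cut algorithm then solves problem~$(\ast)$ in a number of arithmetic operations bounded by a polynomial in $|\hat V|,|\hat E|$, independent of the cost magnitudes. Combined with the $O(|E|^2)$ preprocessing (including oracle calls), the whole procedure runs in strongly polynomial time, as claimed.
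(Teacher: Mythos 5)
Your proposal is correct and follows essentially the same route as the paper: compute the rotation weights $\zeta(R)=c\beta^R$, reduce~\refeq{min-cost} to the minimum-weight ideal problem~\refeq{min-cost-ideal} via the cost identity, view ideals as closed sets of the Hasse diagram, and apply Picard's reduction to a minimum $s$--$t$ cut in $\hat H$. The only additions are routine details the paper leaves implicit (the size of $\hat H$, replacing infinite capacities by a sufficiently large finite value, and invoking a strongly polynomial max-flow algorithm), which are fine and do not change the argument.
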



\section{Arbitrary quotas on $W$}  \label{sec:general}

Up to now we have described constructions and proved assertions in the assumption that in the model of stability that we deal with (CBM) the quotas of all vertices in the part $W$ are equal to 1. In fact, the obtained results can be extended, rather straightforwardly, to a general case of arbitrary quotas $q(w)\in\Zset_+$, $w\in W$, and below we give a brief outline of needed refinements and changes, leaving to the reader verification of details where needed.
\smallskip

1) First of all, we should specify the construction of active graph $\Gamma(X)$ for a stable matching $X\subseteq E$ (Sect.~\SSEC{active_g}). Earlier, for a fully filled vertex $w\in W^=$, we denote by $x_w$ the unique edge in $X_w=X\cap E_w$. Now, considering a fully filled vertex $w\in W^=$ (i.e. satisfying $|X_w|=q(w)$), we denote by $x_w$ the last (least preferred) edge in $X_w$. The definitions of $W$-admissible edge $a_w$ in $E_w$ and tandem $(a_w,b_f^w)$ remain as in~\refeq{W-admiss} and~\refeq{ab}. So, as before, each vertex in $W$ has at most one $W$-admissible edge.
\smallskip

2) As before, in the graph $D=D(X)$ induced by the directed $W$- and $F$-admissible edges, for a vertex $f\in F\cap V_D$, the number of entering $W$-admissible edges (of the form $a_w=(w,f)$) is greater than or equal to the number of leaving $F$-admissible edges (of the form $b=(f,w')$). (Here the former number exceeds the latter one when some entering edge $a_w=(w,f)$ satisfies $C_f(X_f+a_w)=X_f+a_w$ (and therefore, $a_w$ does not create a tandem) or when there are two tandems $(a,b),(a',b')$ with $b=b'$). At the same time, for a vertex $w\in W\cap V_D$, there is one leaving $W$-admissible edge, namely, $a_w$, but the number $\sigma_w$ of entering $F$-admissible edges $(f,w)$ may be 0, 1 or more (the latter can happen when $q(w)>1$).
 \smallskip
 
3) The Cleaning procedure remains literally the same, it transforms $D$ into the active graph $\Gamma=\Gamma(X)$. Note that for vertices $w$ in $W_\Gamma:=W\cap V_\Gamma$, the procedure provides validity of $|\deltain(w)|\ge|\deltaout(w)|=1$ (where $\deltain(w)$ and $\deltaout(w)$ denote the sets of entering and leaving edges incident to $w$ in $\Gamma$, respectively); in case $\sigma_w>1$, apriori a strict inequality is admitted here. However, this does not happen in reality, and property~\refeq{balance} follows from easy balance relations (since, as before, $|\deltain(v)|\ge|\deltaout(v)|$ holds for all $v\in V_\Gamma$). Thus, as before, $\Gamma$ is split into pairwise edge disjoint cycles-rotations, each vertex in $W_\Gamma$ belongs to exactly one rotation, and any rotation $L$ passes each vertex of $W_L$ exactly once, though may pass the same vertex of $F_L$ many times. 
 \smallskip
 
4) Lemma~\ref{lm:a1-ak} remains valid and its proof does not change. In the proof of Proposition~\ref{pr:XXp}, now the set $X'_w$ need not consist of only one edge, but according to the general rule, is defined as $X'_w:=X_w-e+a_w$, where $\{e\}=L^-\cap E_w$. This does not affect the proof, up to minor corrections. In the proof of Proposition~\ref{pr:XYXp}, in case $X_w\ne Y_w$, instead of $\{x_w\}=X_w$ and $\{y_w\}=Y_w$ (as before), the edges $x_w$ and $y_w$ should now be chosen from $X_w-Y_w$ and $Y_w-X_w$, respectively, so as to satisfy $x_w>_w y_w$ (which is possible to do since $X_w\succ_w Y_w$). The structure of the proof and the principal moments in it preserve.
 \smallskip
 
5) In the proof of Proposition~\ref{pr:omega}, instead of $\{m_w\}=M_w$ and $\{x_w\}=X_w$, we now should choose $m_w\in M_w-X_w$ and $x_w\in X_w-M_w$ so as to satisfy $m_w>_w x_w$. Other details throughout Sect.~\SEC{poset} do not change in essence. 
 \smallskip
 
6) One can check that the constructions and results established in Sects.~\SEC{build} and~\SEC{affine}, preserve for arbitrary quotas on $W$. In particular, Theorem~\ref{tm:time_poset} remains valid, which asserts that the rotational poset $(\Rscr,\lessdot)$ can be constructed in time $O(|E|^2)$.


\section{Model of stability with sequential choices}  \label{sec:sequent}

As is mentioned in the Introduction, our model of stability (CBM) originally appeared by a reduction from a more general model on stable matchings in a bipartite graph. In the latter, the preferences of agents of one side (``firms'') are given by use of plottian and cardinally monotone CFs, whereas those of the other side (``workers'') by use of so-called sequential CFs; for definiteness we will refer to this model as \emph{sequential}, or the \emph{S-model}. The work~\cite{dan} establishes a reduction of an instance of S-model to an instance of CBM under which the sets of stable matchings turn out to be isomorphic. Below, following~\cite{dan}, we review a description of the S-model and its reduction to CBM and recall the result on isomorphisms of the corresponding stable matchings and their lattices. Then we discuss what corollaries for the S-model can be derived from our results and applications obtained for CBM. 

It should be noted that~\cite{dan} considers models with sequential CFs in a wider context, however we will be interested only in the one directly related to a reduction to CBM that we deal with. For simplicity we assume that the graphs in question have no multiple edges (except for a remark in the end of this section).

As before, one considers a bipartite graph $G=(V,E)$ with vertex parts $W$ (``workers'') and $F$ (``firms''), and for each vertex $f\in F$, one is given a plottian and cardinally monotone CF $C_f:2^{E_f}\to 2^{E_f}$. At the same time, for each vertex $w\in W$, one is given a sequence of  \emph{linear} CFs $C_w^1,\ldots,C_w^{q(w)}$ on $E_w$; this means that each CF $C_w^i$ corresponds to a linear order $>_w^i$ on $E_w$ and picks from each subset $Z\subseteq E_w$ the maximal element relative to $>_w^i$. One may assume that $q(w)\le |E_w|$.
  \medskip
  
\noindent\textbf{Definition.} Let $C_w^1\ast\cdots\ast C_w^{q(w)}$ denote the CF $C_w$ that for each $Z\subseteq E_w$, determines the subset $C_w(Z)$ consisting of the elements $z_1,\ldots,z_k$ (where $k=\min\{|Z|,q(w)\}$) assigned by the following recursive rule: $z_i$ is the maximal element relative to $>_w^i$ in the set  $Z-\{z_1,\ldots,z_{i-1}\}$. Such a  $C_w$ is called the \emph{sequential} CF of rank $q(w)$ generated by linear orders $>_w^1,\ldots,>_w^{q(w)}$ (or linear CFs $C_w^1,\ldots,C_w^{q(w)}$). 
  \medskip
  
One shows that this $C_w$ is plottian and quota-filling, with the quota $q(w)$. A collection $\{C_v,\; v\in V\}$ of CFs as above just gives rise to what we call an instance of S-model. Note that this model is a particular case of SBM (special boolean model mentioned in the Introduction), and at the same time, it generalizes CBM. (Note also that, as is said in~\cite{dan}, it is known that not every plottian quota-filling CF could be represented as a sequential one, in the above sense.)

Next we start describing the reduction of the S-model with CFs $C_v$ ($v\in V$) as above. The graph $G$ is transformed by replicating vertices of the part $W$. More precisely,
  \begin{numitem1} \label{eq:repl_w}
each vertex $w\in W$ is replaced by $q(w)$ vertices $w^1,\ldots, w^{q(w)}$, and accordingly, each edge $wf\in E$ generates $q(w)$ edges $w^if$, $i=1,\ldots, q(w)$.
  \end{numitem1}
  
Denote the obtained graph by $\tilde G=(\tilde V,\tilde E)$ and denote by $\pi$ the natural map (projection) of $\tilde V\cup\tilde E$ to $V\cup E$. As a rule (though not always), the objects related to $\tilde G$ will be denoted with tildes. In particular, copies of vertices $f\in F$ in $\tilde G$ can be denoted by $\tilde f$, and for any vertex $\tilde v\in \tilde V$, the set of its incident edges in $\tilde G$ can be denoted by $\tilde E_{\tilde v}$. 

Now we explain how to define the preferences and choice functions for vertices $\tilde v\in\tilde V$, denoting the latter as $\tilde C_{\tilde v}$. This is routine for the vertices in $\tilde W$, namely: 
  \begin{numitem1} \label{eq:Cwi}
for a vertex $w^i\in \tilde W$ (where $w\in W$ and $1\le i\le q(w)$), $\tilde C_{w^i}$ is the linear CF related to the linear order $>_{w^i}$ on $\tilde E_{w^i}$ that is a copy of the order $>_w^i$ on $E_w$.
  \end{numitem1}
  
For vertices in $\tilde F$, the construction of CFs is less trivial. More precisely, for $\tilde f\in \tilde F$ (a copy of $f$ in $F$), consider a subset of edges $\tilde Z\subseteq \tilde E_{\tilde f}$ and its image $Z=\pi(\tilde Z)$ in $G$, and form $\tilde C_{\tilde f}$ as follows:
 \begin{numitem1} \label{eq:Cf}
for each $wf\in C_f(Z)$, take the edge $w^if$ in the ``fibre'' $\pi^{-1}(wf)$ such that $w^if$ belongs to the set $\tilde Z$ and has the minimal number $i$; then $\tilde C_{\tilde f}$ is the union of the taken elements.
  \end{numitem1}
  
Let $\Sscr$ denote the set of stable matchings for the S-model with graph $G$ and CFs $C_f$ ($f\in F$) and $C_w$ ($w\in W$) (where $C_f$ is plottian and cardinal monotone, while $C_w$ is sequential and generated by linear orders $>_w^1,\ldots,>_w^{q(w)}$). Let $\tilde\Sscr$ denote the set of stable matchings for CBM with the constructed graph $\tilde G$, CFs $\tilde C_{\tilde f}$ ($f\in F$) and linear orders $>_{w^i}$. In~\cite{dan}, the following key properties are proved:
 \begin{numitem1} \label{eq:red_prop}
 \begin{itemize}
\item[(i)] for each stable matching $\tilde X\in\tilde\Sscr$, the restriction of the map $\pi$ to $\tilde X$ is injective;
  \item[(ii)] $\pi$ induces a bijection between the sets of stable matchings $\tilde\Sscr$ and $\Sscr$;
  \item[(iii)] the correspondence between stable matchings $\tilde X\stackrel{\pi}{\longmapsto} X$ gives an isomorphism of the lattices on $\tilde\Sscr$ and $\Sscr$, i.e. for $X,Y\in\Sscr$, there hold $\pi^{-1}(X\curlywedge Y)=\pi^{-1}(X)\,\tilde\curlywedge\,\pi^{-1}(Y)$ and $\pi^{-1}(X\curlyvee Y)=\pi^{-1}(X)\,\tilde\curlyvee\,\pi^{-1}(Y)$ (where $\curlywedge$ and $\curlyvee$ mean the meet and join operations on $\Sscr$, and similar notation with tildes is used for $\tilde\Sscr$).
   \end{itemize}
   \end{numitem1}
   
Based on~\refeq{red_prop} and using the obtained results on rotations for CBM, we can give a desctiption of rotations for the S-model.

For this purpose, consider a rotation $\tilde R\subset \tilde E$ for CBM with the graph $\tilde G$. It is applied to move from some stable matching $\tilde X\in\tilde S$ to an immediately succeeding (relative to $\prec_{\tilde F}$ in $\tilde S$) stable matching $\tilde X'$, namely, $\tilde X'=(\tilde X-\tilde R^-)\cup \tilde R^+$. Let $X:=\pi(\tilde X)$ and $X':=\pi(\tilde X')$. By~\refeq{red_prop}(i),(ii), $\pi$ establishes a bijection between $\tilde X$ and $X$, as well as between $\tilde X'$ and $X'$, whence one can conclude that $\tilde R^-$ is bijective to $\pi(\tilde R^-)$,  and $\tilde R^+$ is bijective to $\pi(\tilde R^+)$. Apriori we cannot exclude the situation when $\Delta:=\pi(\tilde R^-)\cap \pi(\tilde R^+)\ne\emptyset$ (in this case, there is an edge $wf\in E$ such that $w^if\in\tilde R^-$ and $w^jf\in\tilde R^+$ with $i\ne j$), and at this moment we leave the possibility of $\Delta\ne \emptyset$ as an open question. Define
  $$
  R^-:=\pi(\tilde R^-)-\Delta \quad\mbox{and}\quad R^+:=\pi(\tilde R^+)-\Delta.
  $$
Then $X'=(X-R^-)\cup R^+$, $|R^-|=|R^+|$, and both sets $R^-$ and $R^+$ are nonempty (for otherwise, there would be $\tilde X\ne\tilde X'$ but $\pi(\tilde X)=\pi(\tilde X')$, contrary to~\refeq{red_prop}(ii)).

One can see that $R:=\pi(\tilde R)-\Delta$ generates an edge simple cycle in $G$ (induced by the rotation $\tilde R$ regarded as a cycle), and in this cycle the edges from $R^-$ (``negative'') and $R^+$ (``positive'') alternate. This $R$ (regarded, depending on the context, as a set of edges or as a cycle) just plays the role of rotation in $G$, and we say that the stable matching $X'$ is obtained from $X$ by applying the rotation $R$. 

(We also leave it as an open question whether a rotation $R$ can pass the same vertex in $W$ more than once, which, as we know, is impossible for $\tilde R$ and $\tilde W$.)

Summing up what is said above, we can derive from the properties in~\refeq{red_prop} and results for CBM the following, rather straightforward, corollaries:
  \begin{numitem1} \label{eq:rot_sequent}
  \begin{itemize}
\item[(i)] each stable matching $X\in\Sscr$ can be obtained from the minimal matching in $(\Sscr,\prec_F)$ by applying a sequence of rotations in $G$;
\item[(ii)] the correspondence $\tilde R\mapsto R=\pi(\tilde R)-(\pi(\tilde R^+)\cap \tilde R^-))$ gives a bijection between the rotations in $\tilde G$ and $G$;
\item[(iii)] $\pi$ induces an isomorphism between the rotational posets for $\tilde G$ and $G$.
 \end{itemize}
 \end{numitem1}
   
Note that the size of the graph $\tilde G$ created by replicating each vertex $w\in W$ by $q(w)\le |F|$ copies can be roughly estimated as $O(|W| |F|)$ regarding the vertices and $O(|W| |F|^2)$ regarding the edges. Therefore, from Theorem~\ref{tm:time_poset} one can conclude that
  \begin{numitem1} \label{eq:time_sequent} 
for the S-model with a graph $G=(V=W\sqcup F,E)$, the set of rotations and their poset can be constructed in time $O(|W|^2|F|^4)$ (including ``oracle calls''). 
 \end{numitem1}
(This time bound is close (even a bit smaller when $|W|> |F|$) to the bound $O(|W|^3|F|^3)$ on the number of oracle calls obtained in~\cite{FZ} to construct the poset of rotation for SBM.)

An efficient method of constructing rotations and their poset in the S-model enables us to efficiently solve the problem of minimizing a linear function over the set of stable matchings, by using an approach similar to the one described in Sect.~\SEC{affine}.

In conclusion, note that when dealing with the S-model for a graph $G$ with possible multiple edges, we can act as described in Remark~1 from Sect.~\SEC{def}, yielding a reduction to a graph $G'$ without multiple edges (equipped with the induced CFs for the old vertices and linear orders for the added ones), and then establish relations between rotations in  the graphs $G$, $G'$ and $\tilde G'$, by arguing as above and using Remark~2 from~\SEC{build} (we omit details here).
\medskip 

\noindent\textbf{Acknowledgment.} The author is thankful to Vladimir Ivanovich Danilov for numerous useful discussions on the topic of this paper and wider, and pointing out to me his paper~\cite{dan}.


\end{document}